\theoremstyle{plain}
\newtheorem{theorem}{\indent\sc Theorem}[section]
\newtheorem{lemma}[theorem]{\indent\sc Lemma}
\newtheorem{corollary}[theorem]{\indent\sc Corollary}
\theoremstyle{definition}
\newtheorem{definition}[theorem]{\indent\sc Definition}
\newtheorem{remark}[theorem]{\indent\sc Remark}
\newtheorem{example}[theorem]{\indent\sc Example}
\newtheorem{conjecture}[theorem]{\indent\sc Conjecture}
\newcommand{\FF}{\mathbb{F}}
\newcommand{\NN}{\mathbb{N}}
\newcommand{\ZZ}{\mathbb{Z}}
\newcommand{\RR}{\mathbb{R}}
\newcommand{\CC}{\mathbb{C}}
\newcommand{\HH}{\mathbb{H}^3}
\newcommand{\Len}{\mathcal{L}}
\newcommand{\KK}{\mathscr{K}\mkern-2mu}
\newcommand{\KKsub}[1]{\mathscr{K}_{\mkern-2mu{#1}}}
\newcommand{\triang}{\mathcal{T}}
\newcommand{\Id}{\mathrm{Id}}
\newcommand{\mapswith}[1]{\xrightarrow{#1}}
\newcommand{\slE}{\mathfrak{e}}
\newcommand{\slF}{\mathfrak{f}}
\newcommand{\slH}{\mathfrak{h}}
\newcommand{\basis}[1]{\underline{#1}}
\newcommand{\nword}[2]{$#1$\nobreakdash--#2}
\newcommand{\PSL}{PSL_2\CC}
\newcommand{\slC}{\mathfrak{sl}_2\CC}
\newcommand{\glvar}{\mathcal{V}_{\mathcal{T}}^+}
\newcommand{\GIT}{/\mkern-6mu/}
\newcommand{\dby}[1]{\frac{\partial}{\partial #1}}
\newcommand{\paraZ}{\Sigma_\triang}
\newcommand{\paraX}{\Sigma'_{\triang,\theta}}
\newcommand{\Choibasis}{\basis{d^\bullet}}
\newcommand{\auxiliaryMap}{\psi}
\newcommand{\mathstos}[2]{\tikz[baseline=({u.south})]{
	\node[inner sep=0.3ex] (u) at (0,0) {$#1$};
	\node[inner sep=0.3ex,below=0.2ex of {u.south west}, anchor=north west] {$#2$};}%
}
\DeclareMathOperator{\Ad}{Ad}
\DeclareMathOperator{\Imag}{Im}
\DeclareMathOperator{\Real}{Re}
\DeclareMathOperator{\Kernel}{Ker}
\DeclareMathOperator{\Coker}{Coker}
\DeclareMathOperator{\linspan}{span}
\DeclareMathOperator{\Isom}{Isom}
\DeclareMathOperator{\diag}{diag}
\DeclareMathOperator{\Tor}{\mathbb{T}}
\DeclareMathOperator{\ATor}{\mathbb{T}_{\Ad}}
\DeclareMathOperator{\vol}{vol}
\DeclareMathOperator{\Hom}{Hom}
\DeclareMathOperator{\tr}{tr}
\DeclareMathOperator{\Or}{Or}
\DeclareMathOperator{\Aff}{Aff}
\DeclareMathOperator{\interior}{int}
\DeclareMathOperator{\const}{const}
\begin{document}
\title[Infinitesimal gluing equations and adjoint torsion]{Infinitesimal gluing equations and the adjoint hyperbolic Reidemeister torsion}
\author[R. Siejakowski]{Rafa\l{} Siejakowski$^*$}
\subjclass[2010]{Primary 57Q10; Secondary 57M50.}
\keywords{hyperbolic 3-manifolds, Reidemeister torsion, ideal triangulations, gluing equations.}
\thanks{$^{*}$The author was supported by the Singapore Ministry of
Education tier 1 grant no.~SPMS-RG66-10 and by
grant no. 2018/12483-0 of the S\~ao Paulo Research Foundation (FAPESP)}
\address{Instituto de Matem\'atica e Estat\'istica \endgraf
Universidade de S\~ao Paulo \endgraf
Rua do Mat\~ao 1010, 05508-090 S\~ao Paulo, SP \endgraf
Brazil}
\email{rafal@ime.usp.br}
\begin{abstract}
We establish a link between the derivatives of Thurston's hyperbolic gluing equations on
an ideally triangulated finite volume hyperbolic 3-manifold and the cohomology of the
sheaf of infinitesimal isometries.
This provides a geometric reformulation of the non-abelian Reidemeister torsion
corresponding to the adjoint of the monodromy representation of the hyperbolic structure.
These results are then applied to the study of the `1\nobreakdash-loop Conjecture' of
Dimofte--Garoufalidis, which we generalize to arbitrary 1\nobreakdash-cusped hyperbolic
3\nobreakdash-manifolds.
We verify the generalized conjecture in the case of the sister manifold of the
figure-eight knot complement.
\end{abstract}
\maketitle

\section{Introduction}
This paper aims to establish a link between an infinitesimal version of Thurston's
hyperbolic gluing equations and the adjoint Reidemeister torsion of a finite volume
hyperbolic \nword{3}{manifold}.
This goal is realized in two steps.
Firstly, we explore the cohomological meaning of the derivatives
of the edge consistency and completeness equations on a
positively oriented geometric ideal triangulation~$\triang$.
The basic idea is to interpret the complex tangent space $T_z\CC_{\Imag>0}$ as the space of
infinitesimal deformations of the geometry of a hyperbolic ideal tetrahedron in~$\HH$ with the shape
parameter $z\in\CC_{\Imag>0}$.
When several ideal tetrahedra are glued together to form a geometric triangulation~$\triang$,
the deformations of the individual tetrahedra induce infinitesimal deformations of the geometry of the
resulting hyperbolic \nword{3}{manifold} $M$.
We describe these deformations as first cohomology classes
with coefficients in the sheaf of infinitesimal isometries of $M$ or a restriction thereof.

Secondly, since the bundle of infinitesimal isometries on a hyperbolic \nword{3}{manifold}~$M$
is isomorphic to the flat, rank~$3$ complex
vector bundle defined by the adjoint of the monodromy representation~$\pi_1(M)\to\PSL$ of the
hyperbolic structure, we can restate Porti's construction~\cite{porti1997} of the combinatorial
\emph{adjoint hyperbolic torsion}~$\ATor(M): H_1(\partial_\infty M;\ZZ)\to\CC^*/\{\pm1\}$
in terms of the sheaf of germs of Killing vector fields on $M$.
This provides a geometric interpretation of the adjoint torsion.

Finally, we show how these insights can be used to calculate the torsion invariant~$\ATor(M)$
in terms of the shapes for an ideal triangulation of $M$.
In particular, our method correctly reproduces the main factor of the
`\nword{1}{loop} invariant', a conjectural expression for the adjoint
torsion given by Dimofte and Garoufalidis in \cite{tudor-stavros}.

\subsection{Infinitesimal gluing equations}
The hyperbolic gluing equations were first introduced by W. Thurston~\cite{thurston-notes}.
Neumann and Zagier~\cite{neumann-zagier} discovered a symplectic property of these equations
which was further studied by Neumann in \cite{neumann1990} and more recently reinterpreted
by Dimofte and van~der~Veen~\cite{tudor-roland-spectral}
in terms of intersection theory on certain branched double covers.
Within mathematical physics, gluing equations have been used to
construct quantum Chern--Simons theories on
ideal triangulations, with the symplectic structure serving as the starting
point for geometric quantization; see in particular Dimofte~\cite{dimofte-quantum-Riemann}
and Dimofte--Garoufalidis~\cite{tudor-stavros}.

Suppose that $\triang$ is an abstract ideal triangulation of a connected orientable
open \nword{3}{manifold}~$M$ with $k$ ideal vertices, the links of which are all tori.
Denote by $N$ be the number of tetrahedra, and hence also of edges, of $\triang$.
Choi~\cite{youngchoi} reformulated the hyperbolic edge consistency equations
in terms of a single map $g:\CC_{\Imag>0}^N\to(\CC^*)^N$, the domain of which is thought
of as the space of shape parameters of $N$ positively oriented ideal tetrahedra.
By definition, $g$ assigns to any \nword{N}{tuple} of shapes
the \nword{N}{tuple} of their products about the edges of $\triang$,
so that the consistency equations read $g(z)=\mathbf{1}$.
Consider also a collection $\theta=(\theta_1,\dotsc,\theta_k)$ of oriented, homotopically
nontrivial curves in normal position with respect to $\triang$, one in each vertex link.
The log-parameters along the constituent curves of $\theta$, defined by the way of~\cite{neumann-zagier},
give rise to a map~$u=u_\theta:\CC_{\Imag>0}^N\to\CC^k$
so that the completeness condition becomes $u(z)=0$.
With these notations, the results of Neumann--Zagier~\cite{neumann-zagier} and Choi~\cite{youngchoi}
imply the existence of the tangential exact sequence
of `infinitesimal gluing equations'
\begin{equation}\label{intro:Choi-seq}
	0 \to T_{u(z)} U \mapswith{Dy} T_z \CC_{\Imag>0}^N
	\mapswith{Dg} T_1 (\CC^*)^N \mapswith{Dp} \CC^k \to 0,
\end{equation}
where $y=y_\theta: U\to g^{-1}(\mathbf{1})$ is a local analytic inverse of $u$ and $p$
is a monomial map defined by the incidences of the edges of $\triang$ to the ideal vertices;
see Section~\ref{gluing-section} below for the details.

\subsection{Infinitesimal hyperbolic isometries}
Recall that a \emph{Killing field} on a Riemannian manifold $M$
is a vector field whose flows are local isometries of $M$.
We denote the Lie algebra of all Killing fields on $M$ by $\KK(M)$.
The assignment of the space~$\KK(U)$ to any
open set~$U\subset M$ defines a sheaf~$\KKsub{M}$ on $M$, called
the \emph{sheaf of (germs of) Killing vector fields}.
Geometrically, the sheaf~$\KKsub{M}$ can be viewed as the sheaf of
local \emph{infinitesimal isometries} of $M$.
The cohomology of $\KKsub{M}$ is therefore closely related to the deformation theory of geometric
structures, as explained in the case of hyperbolic \nword{3}{manifolds} by Hodgson and Kerckhoff
in \cite{craig-steve}.
We refer to \cite{nomizu,matsushima-murakami} for more information on Killing vector fields
in the general setting of symmetric Riemannian manifolds.

Suppose that the triangulation $\triang$ of $M$ is geometric, so that $M$ is endowed with
a finite-volume hyperbolic structure, not necessarily complete.
This in particular fixes the sheaf~$\KKsub{M}$.
Consider the closed subspace $M_0\subset M$ resulting from the removal of disjoint
open neighborhoods of all edges of $\triang$ from $M$
and let $\KKsub{M_0}$ be the restriction of $\KKsub{M}$ to $M_0$.
In particular, we obtain the short exact sequence
\begin{equation}\label{intro:sheaves}
	0\to \KKsub{M,M_0}\to\KKsub{M}\to\KKsub{M_0}\to 0
\end{equation}
of sheaves on $M$, cf.~\cite[Th.~2.9.3]{godement}.
The kernel sheaf~$\KKsub{M,M_0}$ can be understood as follows:
for any open set~$V\subset M$,  $\KKsub{M,M_0}(V)=0$ whenever $V\cap M_0\neq\varnothing$;
otherwise, $\KKsub{M,M_0}(V)=\KKsub{M}(V)$.
Similarly, the sheaf $\KKsub{M_0}$ can be defined over $M$
by $\KKsub{M_0}(V)=\KKsub{M}(V\cap M_0)$ for all open subsets~$V\subset M$.
We refer to \cite[\S2.9]{godement} for more details.

The short exact sequence~\eqref{intro:sheaves} leads to a
cohomology long exact sequence, which in this case reduces to the four non-zero terms
\begin{equation}\label{intro:cohomology-four}
	0 \to H^1(M;\KKsub{M}) \to H^1(M;\KKsub{M_0}) \to H^2(M;\KKsub{M,M_0}) \to H^2(M;\KKsub{M}) \to 0.
\end{equation}
The cohomological meaning of the gluing equations is then established by the following theorem.
\begin{theorem}\label{intro:embedding}
	At a generic hyperbolic structure on $M$, with all ends incomplete, the acyclic
	complex $\eqref{intro:Choi-seq}$ embeds as a subcomplex of $\eqref{intro:cohomology-four}$.
\end{theorem}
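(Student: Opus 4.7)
The plan is to construct four vertical maps $T_uU\to H^1(M;\KK)$, $T_{y(u)}\CC_{\Imag>0}^N\to H^1(M;\KK_{M_0})$, $T_1(\CC^*)^N\to H^2(M;\KK_{M,M_0})$, and $\CC^k\to H^2(M;\KK)$ that together assemble into a morphism of complexes from \eqref{intro:Choi-seq} to \eqref{intro:cohomology-four}, and then to show that each of them is injective at a generic hyperbolic structure. The key geometric input is that $T_z\CC_{\Imag>0}$ at the shape of an ideal tetrahedron $\Delta$ corresponds to an infinitesimal hyperbolic isometry that moves one prescribed vertex while fixing the other three, so that choosing such a framing on each of the $N$ tetrahedra converts a tangent vector $(\dot z_i)\in T_{y(u)}\CC_{\Imag>0}^N$ into a \v{C}ech 1-cocycle for $\KK$ with respect to a good cover of $M_0$ by thickened tetrahedra. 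The third vertical map sends each edge component of $T_1(\CC^*)^N$ to the locally supported cohomology class that records the infinitesimal failure of this cocycle to close up around the corresponding edge of $\triang$. The outermost two vertical maps come from the Weil interpretation of $H^1(M;\KK)$ as the tangent space to the variety of (possibly incomplete) hyperbolic structures and from the peripheral identification $H^2(M;\KK)\cong\CC^k$ supplied by Poincar\'e--Lefschetz duality applied to the cusped manifold.

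Next I would verify commutativity of the three resulting squares. The middle square, in which $Dg$ must agree with the \v{C}ech coboundary $H^1(M;\KK_{M_0})\to H^2(M;\KK_{M,M_0})$, is the technical heart of the argument: one computes explicitly that the difference of the chosen framings of the tetrahedra surrounding an edge, composed around that edge, linearises Thurston's edge-product relation. The leftmost square asserts that the Weil cocycle of an infinitesimal deformation of the incomplete hyperbolic structure descends, via the restriction sheaf map $\KK\to\KK_{M_0}$, to the shape-deformation cocycle attached to the induced variation of the $z_i$. The rightmost square identifies the monomial derivative $Dp$ with the connecting morphism into peripheral cohomology.

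For injectivity I would argue by dimension. Both complexes are exact with successive dimensions $k, N, N, k$; on the left this is the content of the Neumann--Zagier/Choi formalism, and on the right it follows from standard hyperbolic deformation theory, excision around the edges, and the peripheral identification of $H^2(M;\KK)$. Consequently, it suffices to show that a single vertical map is an isomorphism, and I would use $T_uU\to H^1(M;\KK)$: at a generic incomplete structure this map is the differential of the cusp-holonomy parametrisation of the character variety, whose invertibility is the local content of Thurston's hyperbolic Dehn surgery theorem. The remaining three vertical maps are then injective by commutativity and the five-lemma, and indeed isomorphisms by the dimension match.

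The hardest step, I expect, will be the verification that the middle square commutes, since it requires a precise comparison between the purely algebraic derivative of $g$ and the \v{C}ech coboundary obtained by composing local infinitesimal isometries around each edge of $\triang$. A secondary difficulty lies in handling the sheaves $\KK_{M,M_0}$ and $\KK_{M_0}$ rigorously near the ideal vertices, where the hyperbolic structure is only assumed to be incomplete; in particular, deriving the four-term sequence \eqref{intro:cohomology-four} from the short exact sequence \eqref{intro:sheaves} already relies on the vanishing of several cohomology groups that must be justified in this non-complete setting.
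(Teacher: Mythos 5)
Your overall architecture---construct four vertical maps, verify commutativity of the three squares (with the middle square as the technical heart, matching $Dg$ against the connecting/coboundary map via a Weil-type computation), and then deduce injectivity using the invertibility of the outer map coming from the Dehn-surgery parametrisation---is indeed the same skeleton as the paper's proof of the detailed version, Theorem~\ref{commutative-diagram-theorem}.

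However, your injectivity argument has a concrete gap. You assert that both complexes are exact with dimensions $k,N,N,k$ so that a single invertible vertical map forces the rest to be isomorphisms. This is false for \eqref{intro:cohomology-four}: the sheaf $\KK_{M,M_0}$ is supported on the disjoint edge-neighbourhoods $\nu_i$, and each contributes the full three-dimensional space $\KK(\nu_i)\cong\slC$, so $H^2(M;\KK_{M,M_0})$ has dimension $3N$, not $N$; correspondingly $H^1(M;\KK_{M_0})\cong H^1(F_{N+1};\slC)$ also has dimension $3N$. In fact the two middle vertical maps are strict embeddings with nontrivial cokernels --- this is essential to the paper (the cokernel column appears explicitly in the diagram of Theorem~\ref{commutative-diagram-theorem} and feeds into the torsion factorisation). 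Since the five-lemma does not yield injectivity of $f_2,f_3,f_4$ from a single isomorphism $f_1$ alone, your deduction does not go through. The paper closes this gap differently: the third vertical map $\alpha$ is shown to be injective \emph{by construction}, because it sends $\partial/\partial x_i$ to the Killing field $t(e_i,\eps_i)$ sitting in the $i$-th direct summand of $\prod_i\KK(\nu_i)\cong H^2(M;\KK_{M,M_0})$, so distinct coordinates go to linearly independent classes. With $D\paraX$ an isomorphism (the analogue of your cusp-holonomy step, made precise by the computation $\partial_{u_l}\tr^2(\theta_l)=2\sinh u_l\neq0$ when $0<|u_l|<\pi$) and $\alpha$ injective, the Four Lemma then forces $D\paraZ$ injective, and $\beta$ is obtained by a diagram chase. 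You should therefore replace the dimension-matching step with an explicit injectivity-by-construction argument for the map into $H^2(M;\KK_{M,M_0})$, and correct the dimension count.
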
\noindent
The above theorem is stated in more detail as Theorem~\ref{commutative-diagram-theorem} below;
cf. also Theorem~4.3.1 in \cite{rs-phd}.

Recall that the leftmost map~$Dy$ of the exact sequence~\eqref{intro:Choi-seq} depends
on the chosen multicurve~$\theta$.
This is because $y$ was defined as a local analytic inverse of the map~$u=u_\theta$,
the log-parameter along $\theta$.
At the level of cohomology, this dependence is expressed by the following theorem.
\begin{theorem}\label{intro:cup-theorem}
	The unique map $c$ which makes the diagram
	\begin{equation}\label{intro:cup-diagram}
		\begin{tikzcd}
			& T_uU \arrow{r}\arrow{d}{\cong}
			& H^1(M;\KKsub{M})\arrow{r}
			& H^1(M;\KKsub{\partial_\infty M})\arrow{d}{\textstyle c}\\
			& \CC^k \arrow{r}
			& H^2(M;\KKsub{M})\arrow{r}
			& H^2(M;\KKsub{\partial_\infty M})
		\end{tikzcd}
	\end{equation}
	commutative is given by $c(x)=x\smile[\theta]^*$,
	where $[\theta]^*\in H^1(\partial_\infty M;\ZZ)$ is the Poincar\'e dual of the homology
	class of $\theta$.
\end{theorem}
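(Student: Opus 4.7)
The plan is to reduce the identification of $c$ to a local computation on the ideal boundary $\partial_\infty M = \bigsqcup_{j=1}^{k} T_j$, where the cup product becomes Poincar\'e duality on the oriented cusp tori. Three ingredients will suffice: (i)~localisation of the sheaf cohomology of $\KK_{\partial_\infty M}$ at the cusps; (ii)~a cohomological formula for $D(\falka{u}\circ y)$ as a period pairing on the boundary; and (iii)~Poincar\'e duality on each $T_j$.

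For step~(i), the sheaf $\KK_{\partial_\infty M}$ is supported on a collar of the ideal boundary, giving $H^\bullet(M;\KK_{\partial_\infty M}) \cong \bigoplus_{j=1}^{k} H^\bullet(T_j;\KK|_{T_j})$; under this decomposition, cup product with $[\falka{\theta}]^*$ splits into cup products with $[\falka{\theta}_j]^*$ on each $T_j$. For step~(ii), a tangent vector $\dot z \in T_u U$ is sent, via Theorem~\ref{intro:embedding}, to a class $[\xi] \in H^1(M;\KK)$ whose restriction $[\xi|_{T_j}] \in H^1(T_j;\KK|_{T_j})$ encodes the infinitesimal change of the $j$-th parabolic holonomy. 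Using the Neumann--Zagier expression of $\falka{u}_j$ as a signed sum of log-shapes along $\falka{\theta}_j$, I would show that $D(\falka{u}_j\circ y)(\dot z) = \langle [\xi|_{T_j}],[\falka{\theta}_j]\rangle$, the natural pairing between cohomology on $T_j$ and the homology class of $\falka{\theta}_j$. For step~(iii), Poincar\'e duality on the oriented torus $T_j$ rewrites the pairing as $\langle x,[\falka{\theta}_j]\rangle = (x\smile[\falka{\theta}_j]^*)\cap [T_j]$; the cap with the fundamental class is absorbed by the identification of the $j$-th summand of $\CC^k$ with $H^2(T_j;\KK|_{T_j})$, which is one-dimensional by invariance of the parabolic translations.

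Assembling the three steps yields commutativity of the diagram with $c(x) = x\smile[\falka{\theta}]^*$. Uniqueness of $c$ then follows by functoriality: varying the multicurve $\falka{\theta}$ separates classes in $H^1(\partial_\infty M;\KK)$ through the resulting family of evaluation pairings, leaving no freedom in~$c$. The main obstacle I expect is step~(ii): extracting from the map $T_uU \to H^1(M;\KK)$ (which assembles infinitesimal tetrahedral deformations into a Killing cocycle on~$M$) an explicit pairing formula requires carefully tracking how log-shape contributions from the individual tetrahedra crossed by $\falka{\theta}_j$ correspond to cochain evaluations along a loop representing $[\falka{\theta}_j]$. This is a bookkeeping task grounded in the Neumann--Zagier formalism; once it is settled, steps~(i) and~(iii) are purely formal.
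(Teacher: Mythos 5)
Your outline lands on the right cusp-level decomposition and on Poincar\'e duality on the boundary tori, but it misses what is really at stake in the bottom row of the diagram. The unlabeled arrow $\CC^k\to H^2(M;\KK)$ is not a free-floating trivialisation to be chosen by declaring $\CC^k\cong\prod_l H^2(T_l;\KK_{T_l})$; it is the specific map $\beta$ produced by the diagram chase in Theorem~\ref{commutative-diagram-theorem}, which in turn encodes the map $\alpha\colon T_1(\CC^*)^N\to H^2(M;\KK_{M,M_0})$ built from the translational Killing fields $t(e_i,\eps_i)$ along the edges of $\triang$. The theorem therefore requires one to compute what $\beta$ actually does: namely, that the composition $\CC^k\mapswith{\beta}H^2(M;\KK)\to\prod_l H^2(T_l;\KK_{T_l})$ sends the $l$-th standard unit vector to the geometric class $[t_l]$ arising from the infinitesimal translation along a ray into the $l$-th cusp. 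The paper obtains this by tracing each class $\alpha(\partial/\partial x_i)$ through the long exact sequence, splitting the edge $e_i$ into two outward-pointing rays, and summing their restrictions over the $K_{li}$ ends of $e_i$ incident to the $l$-th cusp. Your proposal declares this step ``absorbed by the identification,'' which is in effect assuming the conclusion that needs to be proved.

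Two further points. First, in step (ii) the pairing $\langle[\xi|_{T_j}],[\falka{\theta}_j]\rangle$ between $H^1(T_j;\KK|_{T_j})$ and $H_1(T_j;\ZZ)$ does not yield a complex number on its own: it lands in $H_0(T_j;\KK|_{T_j})$, a one-dimensional space with no canonical basis, and the number you want appears only after choosing the translational Killing field along the core geodesic as a generator (not a parabolic field --- the theorem is stated at incomplete structures with $u_l\neq 0$). This normalisation is precisely the content of the paper's construction of the class $[t]$ and of the companion lemma relating $\smile[\gamma]^*$ to the derivative of complex length via Bromberg's isomorphism; your plan gestures at ``period pairing'' and Neumann--Zagier bookkeeping but does not pin the generator down, which is where the actual identity lives. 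Second, your uniqueness argument by ``varying $\falka{\theta}$'' does not apply: the diagram is for a fixed $\falka{\theta}$, and since the top composite $T_uU\to H^1(M;\KK_{\partial_\infty M})$ has rank $k$ while the target is $2k$-dimensional, the commutativity of the square alone does not pin down $c$; the ``unique'' in the statement must be read within a restricted class of maps (or simply as an imprecision of the introductory statement --- the body version, Theorem~\ref{log-parameters-cups}, asserts only commutativity), and varying the curve changes the diagram rather than constraining $c$.
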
\noindent
In the diagram~\eqref{intro:cup-diagram}, the horizontal maps on the left side are the embeddings
(in fact, isomorphisms) given by Theorem~\ref{intro:embedding} and $\partial_\infty M$ denotes the
toroidal boundary at infinity of $M$.
The vertical isomorphism on the left is the trivial one, induced by the embedding~$U\subset\CC^k$.
We refer to Theorem~\ref{log-parameters-cups} below for a precise statement and to \cite[Section~4.4]{rs-phd}
for an extended discussion.

\subsection{Geometric approach to the adjoint hyperbolic torsion}
The action of $\PSL$ on $\HH$ by orientation-preserving
hyperbolic isometries identifies the space~$\KK(\HH)$ of global Killing fields
with the Lie algebra~$\slC$.
Similarly, the sheaf~$\KK=\KKsub{M}$ on an orientable hyperbolic \nword3manifold~$M$
is locally modeled after $\slC$, which we consider here with the discrete topology.
To understand this relationship algebraically,
assume that $M$ is connected and consider a monodromy representation~$\varrho:\pi_1(M)\to\PSL$
of the hyperbolic structure on $M$.
Let $E=E_{\Ad\varrho}$ be the rank~$3$ vector bundle on $M$
defined by
\begin{equation}\label{explicit-E}
	E = \widetilde{M}\times_{\pi_1(M)}\slC,
\end{equation}
where $\pi_1(M)$ acts on the universal covering space~$\widetilde{M}$ by deck transformations and on
$\slC$ via~$\Ad\varrho$.
By a theorem of Matsushima--Murakami \cite[Theorem~8.1]{matsushima-murakami},
the sheaf $\KKsub{M}$ is isomorphic to the sheaf~$\Gamma(E)$ of continuous sections of $E$.
This isomorphism naturally endows $\KKsub{M}$ with the structure of a locally constant
sheaf of \emph{complex} vector spaces.
In particular, the monodromy of $E$ can be understood as analytic continuation of locally
defined Killing fields, as studied by Nomizu~\cite{nomizu}.

The torsion invariant~$\ATor$ was constructed by Porti~\cite{porti1997}
as a combinatorial twisted Reidemeister torsion of $M$, where the twisting
comes from the action of $\pi_1(M)$ on $\slC$ via $\Ad\varrho$.
In general, adjoint torsion invariants can be interpreted as top degree differential forms
on the regular locus of character varietes \cite{porti1997,dubois-SU2,frohman-kania}
of special linear or projective groups.
Using the isomorphism $\KKsub{M}\cong\Gamma(E)$ we are able to express~$\ATor(M)$
in terms of cellular, simplicial or \v{C}ech cochains with coefficients in $\KKsub{M}$.
In particular, the normalization of torsion introduced by Porti can be recovered from our
geometric interpretation and from Theorem~\ref{intro:cup-theorem}.

\subsection{Application to the 1-loop Conjecture}
Using the methods of mathematical physics, Dimofte and Garoufalidis~\cite{tudor-stavros}
constructed a formal power series~$\mathcal{Z}_\triang(\hbar)$ associated to a geometric ideal
triangulation~$\triang$ of a hyperbolic knot complement~$M=S^3\setminus K$.
The coefficients in the power series~$\mathcal{Z}_\triang(\hbar)$ are complex numbers
defined as weighted sums over Feynman diagrams with an increasing number of loops,
and consequently called `\nword{n}{loop}' coefficients.
They are determined by the combinatorics of~$\triang$ and by the shape
parameter solutions of the gluing equations;
see \cite{garoufalidis-sabo} for an extended discussion.
It is not known whether
the \nword{n}{loop} coefficients are topological invariants of $M$ for $n>0$.

A focal point of \cite{tudor-stavros} is the conjectural `\nword{1}{loop}
invariant'~$\tau_\triang\in\CC/\{\pm1\}$ determined by the \nword{1}{loop}
coefficient of $\mathcal{Z}_\triang(\hbar)$.
If the power series~$\mathcal{Z}_\triang(\hbar)$ is an asymptotic
expansion of the Kashaev invariant~\cite{kashaev-volume-conjecture} at least to the first order,
then the Generalized Volume Conjecture~\cite{gukov-murakami}
would predict that the \nword{1}{loop} invariant coincides
with the torsion~$\ATor(M,\mu)$ corresponding to the knot-theoretic meridian $\mu$.
The equality $\tau_\triang=\ATor(M,\mu)$
is the content of the \emph{1-loop Conjecture} \cite[Conjecture~1.8]{tudor-stavros},
which we state as Conjecture~\ref{1-loop} below.

If true, the \nword{1}{loop} Conjecture would provide a particularly simple and explicit formula
for $\ATor$ in terms of a geometric ideal triangulation.
We show that the main term of this formula arises naturally from
the factorization of torsion induced by \eqref{intro:sheaves}.
This computation, presented in Section~\ref{torsion-section} below,
implies in particular the non-vanishing of $\tau_\triang$.

Compared to the original statement in \cite{tudor-stavros}, our version of the \nword{1}{loop}
Conjecture is adapted to work with any system of homotopically
non-trivial simple closed curves, not necessarily knot meridians.
Hence, we can generalize the conjecture to all triangulated, orientable one-cusped hyperbolic
\nword{3}{manifolds}.
Finally, we verify the generalized conjecture for the minimal
triangulation of the figure-eight sister manifold (\texttt{m003} in the
SnapPea cusped census~\cite{snappy}).
This manifold is not a complement of a knot in an integral homology sphere.

\par\noindent\textbf{Acknowledgements.}
This paper presents the main results of the PhD thesis~\cite{rs-phd} of the author, written
under the direction of Andrew Kricker.
The author would like to express his gratitude to Andrew Kricker for his support and guidance.
The author also wishes to thank John Hubbard, Craig Hodgson, Joan Porti and Tudor Dimofte for
their interest in his work and for many helpful conversations.

\section{Hyperbolic gluing equations}
\subsection{Ideal triangulations and Thurston's equations}\label{gluing-section}
Let $M$ be an orientable connected \nword{3}{manifold} homeomorphic to the interior of a
compact manifold $\overline{M}$ whose boundary is a union of $k>0$ tori.
Suppose that $\triang$ is an ideal triangulation of $M$ with $N$ tetrahedra.
By Euler characteristic considerations, the number of edges of $\triang$ is also $N$.
We fix an arbitrary numbering of the tetrahedra and of the edges of $\triang$ by integers
$\{1,\dotsc,N\}$.
We also label the toroidal ends with integers~$\{1,\dotsc,k\}$.
The hyperbolic gluing equations~\cite{thurston-notes,neumann-zagier} on $\triang$ can
then be written as
\begin{equation}\label{Thurstons-consistency}
	\prod_{j=1}^N {z_j}^{G_{ij}} {z'_j}^{G'_{ij}} {z''_j}^{G''_{ij}} = 1
	\ \text{for}\ i\in\{1,\dotsc,N\} = \{\text{edge indices}\},
\end{equation}
\par\noindent
where $z_j$, $z'_j=1/(1-z_j)$ and $z''_j = 1-1/z_j$ are the three shape parameters associated
to the $j$-th tetrahedron (see Figure~\ref{fig:Tetrahedron}).
We assemble the incidence numbers occurring as exponents in
\eqref{Thurstons-consistency} into the integer matrices~$G=[G_{ij}]$,
$G'=[G'_{ij}]$, $G''=[G''_{ij}]$.

As discussed in \cite{neumann-zagier}, the equations expressing the completeness of a hyperbolic
structure can be written in a similar form.
Suppose that $\theta=\{\theta_l\}_{l=1}^k$ is a collection of homotopically non-trivial oriented
simple closed peripheral curves, one at each end of $M$.
Then the logarithmic form of the completeness equations is
\begin{equation}\label{Thurstons-completeness}
	\sum_{j=1}^N C_{lj} \log z_j + C'_{lj} \log z'_j + C''_{lj} \log z''_j = 0,
	\ \text{for}\ l\in\{1,\dotsc,k\} = \{\text{indices of ends}\}.
\end{equation}
\par\noindent
In the above equation, ``$\log$'' denotes the standard branch of the logarithm on the upper
halfplane $\CC_{\Imag>0}=\{z\in\CC: \Imag z>0\}$
and the coefficients depend on the chosen representative of the free homotopy class of $\theta$
in normal position with respect to $\triang$ by the way of \cite{neumann-zagier}.
As before, we assemble these coefficients into $k\times N$ integer matrices
$C=[C_{lj}]$, $C'=[C'_{lj}]$ and $C''=[C''_{lj}]$.

It is well known~\cite{thurston-notes,youngchoi} that a solution of the
equations~\eqref{Thurstons-consistency}
in $\CC_{\Imag>0}$ turns $\triang$ into a positively oriented geometric triangulation and endows
$M$ with a hyperbolic structure.
If, in addition, the completeness condition \eqref{Thurstons-completeness} holds, the resulting
hyperbolic structure is complete.
We remark that Neumann and Zagier~\cite{neumann-zagier} impose completeness conditions
on a collection of oriented curves forming a \nword{\ZZ}{basis} of $H_1(\partial\overline{M},\ZZ)$.
However, if the triangulation is positively oriented, then
a result of Choi~\cite[Corollary~4.14]{youngchoi} implies that it suffices to impose
the completeness condition on only one nontrivial curve per end.

\subsection{The tangential gluing complex}
For the remainder of the section, we assume that $\triang$ admits a positively
oriented solution $z_*\in\CC_{\Imag>0}^N$ which recovers the unique complete hyperbolic structure
on $M$.
We now summarize certain results concerning the derivatives of the gluing equations,
due to Neumann--Zagier~\cite{neumann-zagier} and Choi~\cite{youngchoi}.

Define the map $g=g_\triang$ by the left-hand sides
of \eqref{Thurstons-consistency}:
\begin{equation}\label{def-g}
	g:\CC_{\Imag>0}^N \to (\CC^*)^N,
	\qquad g(z_1,\dotsc,z_N) =
	\Bigl(\prod_{j=1}^N {z_j}^{G_{ij}} {z'_j}^{G'_{ij}} {z''_j}^{G''_{ij}}\Bigr)_{i=1}^N.
\end{equation}
The set $\glvar := g^{-1}(\mathbf{1})$ is then called the
\emph{positive gluing variety} of the triangulation $\triang$.
For every $i\in\{1,\dotsc,N\}$ and $l\in\{1,\dotsc,k\}$,
denote by $e_i$ the $i$-th edge and by $v_l$ the $l$-th ideal vertex of $\triang$.
Let $K_{li}\in\{0,1,2\}$ be the number of ends of $e_i$ incident to $v_l$,
without any regard for orientations.
Following Choi~\cite{youngchoi}, we may define the monomial map $p$ by
\begin{equation}\label{def-p}
p : (\CC^*)^N\to(\CC^*)^k,
\qquad p(x_1,\dotsc,x_N) = \Bigl(\prod_{i=1}^N x_i^{K_{li}}\Bigr)_{l=1}^k.
\end{equation}
Choi then proves~\cite[Theorem~3.4]{youngchoi} that for any $z\in\glvar$,
there is an exact sequence
\begin{equation}\label{Choi-first}
	0 \to T_z\glvar \to T_z \CC_{\Imag>0}^N \mapswith{Dg} T_1 (\CC^*)^N \mapswith{Dp} \CC^k \to 0
\end{equation}
given by the holomorphic derivatives of the maps defined above,
where the last non-zero term $T_1 (\CC^*)^k$ has been trivially identified with $\CC^k$.
Denote by $u_l=u_l(z)$ the log-parameter along the peripheral curve
$\theta_l$ for $l\in\{1,\dotsc,k\}$;
explicitly, we have
\[
	u_l(z_1,\dotsc,z_N) = \sum_{j=1}^N C_{lj} \log z_j + C'_{lj} \log z'_j + C''_{lj} \log z''_j.
\]
Neumann--Zagier~\cite[\S4]{neumann-zagier} proved that there exists a neighborhood
of $z$ in $\glvar$ on which the log-parameters $\{u_l\}_{l=1}^k$ form a holomorphic
coordinate chart.
Denote by $y=y_\theta$ the inverse of this chart:
\begin{equation}\label{definition-of-y}
	y: U\to\glvar,\qquad
	y(u_1,\dotsc,u_k) = \bigl(z_1(u_1,\dotsc,u_k),\dotsc,z_N(u_1,\dotsc,u_k)\bigr),
\end{equation}
where $U\subset\CC^k$ is a sufficiently small neighborhood of the origin;
note that $y(0)=z_*$ by definition.
With these notations, we may replace \eqref{Choi-first} with the exact sequence
\begin{equation}\label{Choi-choice}
	0 \to T_{u(z)} U \mapswith{Dy} T_{z} \CC_{\Imag>0}^N
	\mapswith{Dg} T_1 (\CC^*)^N \mapswith{Dp} \CC^k \to 0.
\end{equation}

\subsection{Coordinates on character varietes via gluing equations}
\begin{figure}[tb]\centering
	\begin{tikzpicture}
		\node[anchor=south west,inner sep=0] (image) at (0,0)
		    {\includegraphics[height=55mm]{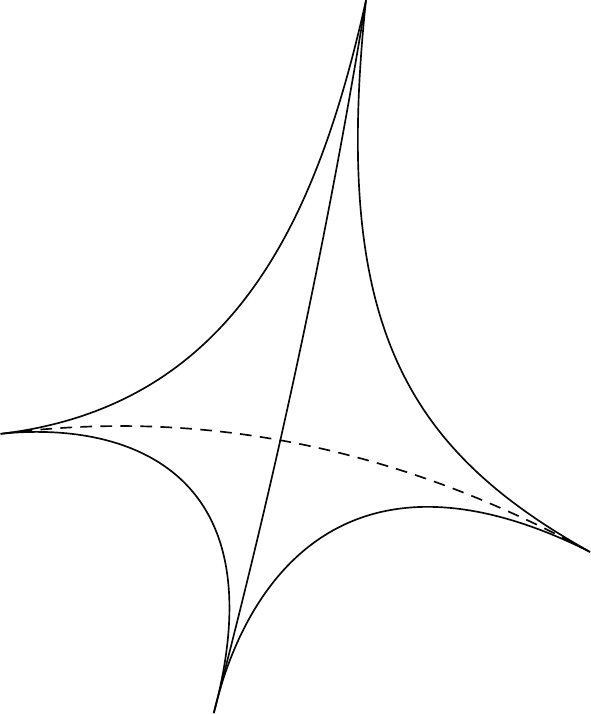}};
		\begin{scope}[x={(image.south east)},y={(image.north west)},
				every node/.style={inner sep=0, outer sep=0}]
			\node[anchor=south] at (0.3, 0.51) {$z$};
			\node[anchor=east] at (0.505, 0.5) {$z'$};
			\node[anchor=south west] at (0.705, 0.46) {$z''$};
			\node[anchor=north] at (0.65, 0.27) {$z$};
			\node[anchor=south] at (0.63, 0.37) {$z'$};
			\node[anchor=north east] at (0.32, 0.31) {$z''$};
		\end{scope}
	\end{tikzpicture}\hspace{0.1\textwidth}
	\includegraphics[height=55mm]{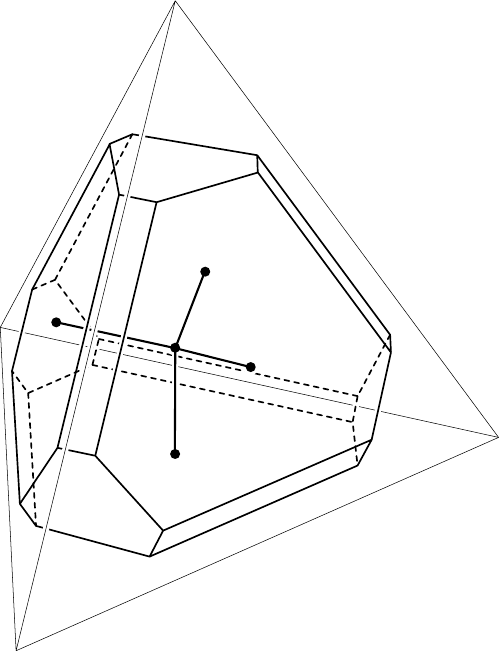}
	\caption{
	\textsc{Left:} The labeling convention for the shape parameters.
	\textsc{Right:}
	A \emph{doubly truncated tetrahedron} results from shaving off neighborhoods
	of the edges of a tetrahedron in addition to truncating its vertices.
	$M_0\subset M$ is homotopy equivalent to the topological space obtained from the ideal
	triangulation 	$\triang$ by replacing its tetrahedra with doubly truncated tetrahedra.
	Hence, $M_0$ retracts onto the graph given by the union of `tetrapod' graphs shown in the figure.
	}\label{fig:Tetrahedron}
\end{figure}
We now wish to summarize the tangential properties of the parametrizations of character
varietes induced by the hyperbolic shapes.
While our discussion focuses on the case of geometric ideal triangulations,
most of the results stated here hold more generally for algebraic solutions
$z\in(\CC\setminus\{0,1\})^N$ of Thurston's gluing equations~\eqref{Thurstons-consistency}.

We fix the orientation of $M$ once and for all and assume that the geometric
triangulation~$\triang$ is positively oriented.
The tori forming the boundary $\partial\overline{M}$ are oriented using the convention
`outward facing normal vector in the last position'.
For every edge $e_i$ of $\triang$, $1\leq i\leq N$, we may choose an open
tubular neighborhood $E_i\supset e_i$
in such a way that $E_i\cap E_j=\varnothing$ for $i\neq j$.

\begin{definition}\label{def-M0}
We define $M_0 = M_0(\triang) = M\setminus\bigcup_{i=1}^N E_i$.
\end{definition}
We remark that the space $M_0$ is called a `manifold with defects' in \cite{tudor-roland-spectral}.
In general,
$M_0\subset M$ is a handlebody which deformation-retracts onto the union of `tetrapod' graphs
inscribed into the tetrahedra of $\triang$, as depicted on the right panel of
Figure~\ref{fig:Tetrahedron}.
A geometric version of this construction was used in
\cite{constantino-frigerio} to study ideal triangulations.

Since the handlebody $M_0$ does not contain any edges of $\triang$, every collection of positively
oriented shape parameters $z=(z_1,\dotsc,z_N)\in\CC_{\Imag>0}^N$ determines a hyperbolic
structure on $M_0$; this structure extends to $M$ if and only if $z\in\glvar$.
Let $\varrho_z\in\Hom(\pi_1(M_0), \PSL)$ be a monodromy representation of the hyperbolic
structure induced on $M_0$ by the shape parameters $z$.
While $\varrho_z$ itself is only defined up to conjugation, it makes sense to talk
about the image of~$\varrho_z$ in the $\PSL$ \emph{character variety}~$X(\pi_1(M_0),\PSL)$.
We refer the reader to \cite{acuna-montesinos,heusener-porti} for more information on
character varieties.

\begin{definition}\label{def-paraZX}
We define
	\begin{equation}\label{shape-parametrization}
		\paraZ: \CC_{\Imag>0}^N\to X(\pi_1(M_0),\PSL),\qquad\paraZ(z) = [\varrho_z],
	\end{equation}
	where $X(\pi_1(M_0),\PSL)=\Hom(\pi_1(M_0), \PSL)\GIT\PSL$ is the \nword{\PSL}{character} variety
	of $\pi_1(M_0)$.
	We also define
	\[
		\paraX: U\to X(\pi_1(M),\PSL),
		\qquad \paraX(u) = \paraZ \circ y_\theta(u).
	\]
\end{definition}

\begin{lemma}
	Let $z_*\in\CC_{\Imag>0}^N$ be a positively oriented solution of edge consistency and
	completeness equations on $\triang$.
	Then $z_*$ has a neighborhood~$V\subset\CC_{\Imag>0}^N$ such that
	$\paraZ(V)$ consists
	only of regular points. Moreover, $\paraZ|_V$ is analytic.
\end{lemma}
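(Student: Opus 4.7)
The proof naturally splits into two claims: analyticity of $\paraZ$ near $z_*$, and regularity of $\paraZ(z_*)$, the latter being open. I would handle them in that order and then invoke openness of the regular locus.

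For analyticity, the plan is to write $\paraZ$ as a composition of three explicit analytic maps. Since $M_0$ deformation-retracts onto the union of tetrapod graphs inscribed in the tetrahedra of $\triang$ (see Figure~\ref{fig:Tetrahedron}), the fundamental group~$\pi_1(M_0)$ is free of finite rank, and a set of free generators can be read off by choosing a basepoint and loops that pass between adjacent tetrahedra through their shared faces. For any $z\in\CC_{\Imag>0}^N$, realizing each tetrahedron as the standard ideal tetrahedron in $\HH$ with shape parameter $z_j$ and gluing the faces via orientation-preserving hyperbolic isometries determines a developing map for $M_0$; the corresponding face-pairing matrices in $\PSL$ have entries that are rational in the $z_j$, $z'_j$, $z''_j$, hence analytic in $z$. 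Composing these matrices along the chosen generators produces a lift $z\mapsto \ro_z\in\Hom(\pi_1(M_0),\PSL)$ that is rational, and post-composing with the algebraic quotient map to $X(\pi_1(M_0),\PSL)$ yields $\paraZ$ as an analytic map on all of~$\CC_{\Imag>0}^N$.

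For regularity at $z_*$, I would exploit the fact that $z_*$ satisfies both the consistency and completeness equations, so the structure on $M_0$ extends to the unique complete finite-volume hyperbolic structure on $M$. Consequently, $\ro_{z_*}$ factors through the map $\pi_1(M_0)\to\pi_1(M)$ induced by the inclusion $M_0\hookrightarrow M$. A general-position argument (the removed edges form a $1$-complex of codimension $2$ in $M$) shows this map is surjective, so the image of $\ro_{z_*}$ coincides with the image of the discrete faithful representation of $\pi_1(M)$. This image is a non-elementary Kleinian group, hence Zariski dense in $\PSL$ and in particular absolutely irreducible with trivial centralizer. By the standard criterion for character varieties of $SL_2$- or $PSL_2$-type (as in \cite{heusener-porti}), $[\ro_{z_*}]$ is a smooth point of $X(\pi_1(M_0),\PSL)$; that is, a regular point.

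To conclude, the regular locus of~$X(\pi_1(M_0),\PSL)$ is Zariski open and therefore open in the analytic topology. Pulling it back by the continuous map~$\paraZ$ yields the desired neighbourhood~$V$ of $z_*$. The main subtlety I anticipate lies not in the individual steps but in correctly book-keeping the dictionary between the geometric construction of $\ro_z$ from shape parameters and the algebraic structure on the character variety; in particular, pinning down the local lift $z\mapsto\ro_z$ (which requires fixing a basepoint and an initial developing frame) so that analyticity is manifest. Once that identification is made, the rest of the argument is a direct application of openness together with the Zariski density of the complete holonomy representation.
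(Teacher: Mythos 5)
Your proposal is correct and follows essentially the same route as the paper: both arguments establish regularity by the Heusener--Porti criterion applied to the free group $\pi_1(M_0)$ (where the paper additionally records that the rank is $N+1\geq 3$ since $M$ is orientable, a hypothesis attached to Proposition~5.8 of \cite{heusener-porti}, which you may wish to verify explicitly), both derive the needed irreducibility of $\Ad\ro_{z_*}$ from $\ro_{z_*}$ being the complete holonomy, and both conclude via openness of the regular locus and analytic submersiveness of the GIT quotient map. Your version usefully spells out the explicit analytic lift $z\mapsto\ro_z$ via face-pairing matrices and the surjectivity of $\pi_1(M_0)\to\pi_1(M)$, both of which the paper leaves implicit.
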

\begin{proof}
As $M_0$ has the homotopy type of a graph, we see that $\pi_1(M_0)\cong F_{N+1}$,
the free group of rank $N+1$.
We assumed that $M$ is orientable, so $N\geq 2$ and thus the rank of $\pi_1(M_0)$
is at least three.
By a result of Heusener--Porti~\cite[Proposition~5.8]{heusener-porti},
a \nword{\PSL}{representation} $\varrho_z$ maps to a regular point of $X(\pi_1(M_0),\PSL)$
if and only if the adjoint representation $\Ad\varrho_z$ is irreducible.
Since $\Ad\varrho_{z_*}$ is irreducible and the GIT quotient map
$\Hom(\pi_1(M_0),\PSL)\mapswith{/\mkern-4mu/}X(\pi_1(M_0),\PSL)$ is an analytic submersion
at regular points of the character variety, the result follows.
\end{proof}

In particular, the above lemma implies that $\paraX$ is an analytic map,
provided that the Dehn surgery parameter space~$U\subset\CC^k$ is taken to be small enough.
This parametrization was first studied by Neumann--Zagier~\cite[\S4]{neumann-zagier}.

\begin{lemma}\label{injectivity-lemma}
	If $u=(u_1,\dotsc,u_k)\in U$ satisfies $0<|u_l|<\pi$ for all $l$, then
	the derivative
	\[
		D\paraX: T_u U\to T_{[\varrho]}X(\pi_1(M),\PSL),
		\quad \text{where}\ [\varrho]=[\varrho_{y(u)}],
	\]
	is an isomorphism.
\end{lemma}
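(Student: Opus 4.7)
The plan is to combine a dimension count at the target with a trace-based projection onto $\CC^k$ whose composition with $\paraX$ is easy to analyse.

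First I would argue that a sufficiently small neighbourhood of $[\ro_{z_*}]$ in $X(\pi_1(M),\PSL)$ is a smooth complex manifold of dimension $k$. This is a classical consequence of Thurston's hyperbolic Dehn surgery theorem (or equivalently, of Weil's infinitesimal rigidity together with the deformation theory of the discrete faithful character), and it matches $\dim_\CC U = k$. It therefore suffices to show that $D\paraX$ is injective at each $u$ satisfying $0<|u_l|<\pi$ for all $l$.

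To produce this injectivity, I would introduce the holomorphic map
\[
    \tau\colon X(\pi_1(M),\PSL)\to\CC^k, \qquad \tau([\ro])_l = \tr(\ro(\theta_l))^2 - 4,
\]
which is well defined because the squared trace is invariant under the $\pm$ sign ambiguity of lifts from $\PSL$ to $SL_2\CC$. The key fact built into the definition of $y_\theta$ is that $u_l$ is the log-holonomy of $\theta_l$ in the monodromy $\ro_{y(u)}$, so the eigenvalues of a lift of $\ro_{y(u)}(\theta_l)$ to $SL_2\CC$ are $\pm e^{\pm u_l/2}$. This gives the explicit formula
\[
    (\tau\circ\paraX)(u)_l = 4\sinh^2(u_l/2),
\]
depending only on the $l$-th coordinate. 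Differentiating, the Jacobian of $\tau\circ\paraX$ at $u$ is the diagonal matrix $\diag\bigl(2\sinh(u_l)\bigr)_{l=1}^{k}$. The zeros of $\sinh$ on the imaginary axis are exactly $\{ik\pi : k\in\ZZ\}$, all of which are excluded by the hypothesis $0<|u_l|<\pi$. Hence the composition has invertible derivative, forcing $D\paraX$ itself to be injective, and therefore an isomorphism by the dimension count at the target.

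The main obstacle is bookkeeping: carefully reconciling the Neumann--Zagier normalisation implicit in \eqref{Thurstons-completeness} and in the definition of $y_\theta$ with the resulting eigenvalues of the peripheral holonomy in $\PSL$. Once the identification of $u_l$ as the log-holonomy of $\theta_l$ is pinned down, the remainder is a routine computation, and the hypothesis $0<|u_l|<\pi$ is seen to be precisely the sharpest condition that simultaneously excludes parabolic behaviour ($u_l=0$) and the first non-trivial zero of $\sinh$ on the imaginary axis.
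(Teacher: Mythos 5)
Your proposal is correct and takes essentially the same route as the paper: both arguments reduce to injectivity via the fact that the character variety has complex dimension $k$ near the discrete faithful representation, then compute that the squared peripheral traces depend on $u$ via $4\cosh^2(u_l/2)$ (your $\tr^2-4 = 4\sinh^2(u_l/2)$ differs only by a constant, so the derivative is the same $2\sinh(u_l)$), whose non-vanishing on $0<|u_l|<\pi$ gives the result. The only cosmetic difference is that the paper asserts the squared traces form local coordinates and diagonalize $D\paraX$, whereas you argue slightly more economically that the composition with the trace map already has invertible Jacobian, which suffices given the dimension count.
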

\begin{proof}
It is well known~\cite{neumann-zagier,bromberg-rigidity} that
the complex dimension of the character variety~$X(\pi_1(M),\PSL)$ at a
discrete faithful representation is
equal to the number~$k$ of cusps of $M$.
In a small neighborhood of
the point $[\varrho_{y(u)}]\in X(\pi_1(M),\PSL)$, the chosen multicurve~$\theta$
defines local holomorphic coordinates via squared traces $\tr^2(\theta_l)$,
$1\leq l\leq k$; cf.~\cite{heusener-porti,craig-steve}.
In terms of $u$, we have $\tr^2(\theta_l) = 4\cosh^2\frac{u_l}{2}$, showing that these coordinates
diagonalize $D\paraX$.
We compute $\frac{\partial}{\partial u_l}\tr^2(\theta_l)=2\sinh(u_l)$, which
is non-zero whenever $0<|u_l|<\pi$.
Hence, $D\paraX$ is an isomorphism at these points.
\end{proof}
The well-known construction due to A.~Weil~\cite{weil-remarks}
identifies the tangent space to the space of conjugacy classes of representations,
here~$T_{[\varrho]}X(\pi_1(M),\PSL)$,
with the first cohomology group~$H^1(M;\Ad\rho)\cong H^1(M;\KKsub{M})$.
By applying Weil's isomorphism, we may therefore think of $D\paraX$
as taking values in $H^1(M;\KKsub{M})$.

\section{The cohomological content of the gluing equations}
\subsection{The long exact sequence associated to an ideal triangulation}
Consider $M$ with a hyperbolic structure obtained from a positively
oriented solution~$z\in\glvar$ and denote by $\KKsub{M}$ the sheaf of germs of
Killing vector fields on $M$.
Since $M_0$ is a closed subspace of $M$, we obtain a short exact sequence of sheaves
\(
	0\to\KKsub{M,M_0} \to \KKsub{M}\to \KKsub{M_0}\to 0
\),
whose associated long exact sequence in cohomology has the form
\begin{equation}\label{long-exact-cohomology}
	\begin{tikzcd}
	0\arrow{r} & H^0(M; \KKsub{M,M_0})\arrow{r} & H^0(M;\KKsub{M})\arrow{r} & H^0(M;\KKsub{M_0})
	\arrow[out=270, in=90, looseness=0.5]{dll} & \\
	 & H^1(M; \KKsub{M,M_0})\arrow{r} & H^1(M;\KKsub{M})\arrow{r} & H^1(M;\KKsub{M_0})
	\arrow[out=270, in=90, looseness=0.5]{dll} & \\
	 & H^2(M; \KKsub{M,M_0})\arrow{r} & H^2(M;\KKsub{M})\arrow{r} & H^2(M;\KKsub{M_0})\arrow{r} & 0.
	\end{tikzcd}
\end{equation}
Note that both $M$ and $M_0$ have the homotopy type of \nword{2}{complexes}, so
that we may terminate the sequence after the cohomology groups in degree~$2$.

\begin{lemma}[cf. Proposition~4.1.2 in \cite{rs-phd}]\label{vanishing-cohomology-groups}
	We have
	\[
		H^0(M; \KKsub{M,M_0}) =
		H^0(M;\KKsub{M}) =
		H^0(M;\KKsub{M_0}) =
		H^1(M; \KKsub{M,M_0}) =
		H^2(M;\KKsub{M_0}) = 0.
	\]
\end{lemma}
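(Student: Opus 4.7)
The plan is to compute each cohomology group by exploiting the Matsushima--Murakami identification $\KK\iso\Gamma(E_{\Ad\ro})$ so that $\KK$ becomes a locally constant sheaf on $M$ with stalk $\slC$ and monodromy $\Ad\ro_z$. Since $M_0\subset M$ is closed with open complement $U=M\setminus M_0=\bigsqcup_{i=1}^N\nu_i$, I would use the standard identifications $\KK_{M_0}=i_*(\KK|_{M_0})$ and $\KK_{M,M_0}=j_!(\KK|_U)$, yielding $H^*(M;\KK_{M_0})\iso H^*(M_0;\KK|_{M_0})$ and $H^*(M;\KK_{M,M_0})\iso H^*(M,M_0;\KK)$. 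Three topological inputs drive the argument: (i)~each $\nu_i$ is contractible, so van~Kampen implies that the inclusion $M_0\hookrightarrow M$ induces a surjection $\pi_1(M_0)\twoheadrightarrow\pi_1(M)$ whose kernel is the normal closure of the edge meridians; (ii)~$M_0$ deformation retracts onto the union of tetrapod graphs of Figure~\ref{fig:Tetrahedron}, hence has the homotopy type of a 1-complex; (iii)~for $z$ near the complete-structure solution~$z_*$ the representation $\Ad\ro_z$ is close to the discrete faithful one and is therefore irreducible.

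Four of the five vanishings then follow with little effort. Both $H^0(M;\KK)$ and $H^0(M_0;\KK|_{M_0})$ equal the invariant subspaces $\slC^{\Ad\ro(\pi_1 M)}=\slC^{\Ad\ro(\pi_1 M_0)}$, which agree by the surjectivity from (i) and vanish by the irreducibility from (iii). The dimension bound from (ii) gives $H^2(M_0;\KK|_{M_0})=0$ for any local system. For $H^0(M;\KK_{M,M_0})$, a global section of $j_!(\KK|_U)$ is by definition a section of $\KK|_U$ whose support is closed in $M$; each connected component $\nu_i$ is open in $M$ with non-empty frontier, so such a support can only be empty.

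The remaining vanishing, $H^1(M;\KK_{M,M_0})=0$, is where the real work lies. Combining the long exact sequence of the pair $(M,M_0)$ with the already established $H^0(M_0;\KK|_{M_0})=0$, the group $H^1(M,M_0;\KK)$ embeds as the kernel of the restriction $H^1(M;\KK)\to H^1(M_0;\KK|_{M_0})$. Translating both sides into group cohomology $H^1(\pi_1(\,\cdot\,);\slC_{\Ad\ro})$, this restriction is the inflation map associated to the quotient $\pi_1(M_0)\twoheadrightarrow\pi_1(M)$, in which the kernel necessarily acts trivially on $\slC$ because the $\Ad\ro$-action factors through $\pi_1(M)$. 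The five-term exact sequence of the Lyndon--Hochschild--Serre spectral sequence then begins with $0\to H^1(\pi_1(M);\slC)\to H^1(\pi_1(M_0);\slC)$, delivering the required injectivity.

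The main obstacle is precisely this last step: the other four vanishings are essentially dimensional or invariant-theoretic, whereas the injectivity of restriction on $H^1$ genuinely relies on the splitting provided by Lyndon--Hochschild--Serre together with the structure of the kernel of $\pi_1(M_0)\twoheadrightarrow\pi_1(M)$ (normally generated by the edge meridians) and the fact that this kernel acts trivially on the coefficients.
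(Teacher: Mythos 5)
Your argument is correct, and it arrives at all five vanishings by a route that differs from the paper's sketch at two points. For $H^0(M;\KK_{M_0})$, you observe directly that $H^0(M_0;\KK|_{M_0})=\slC^{\Ad\ro(\pi_1(M_0))}$, which coincides with $\slC^{\Ad\ro(\pi_1(M))}=0$ because $\pi_1(M_0)\twoheadrightarrow\pi_1(M)$ and the action factors through $\pi_1(M)$; the paper instead leaves this group for last and deduces its vanishing from the Euler characteristic of the long exact sequence once the other four are known. For $H^1(M;\KK_{M,M_0})$, the paper's sketch is local: it excises down to the pairs $(\bar\nu_i,\partial\nu_i)$ and uses that each $\bar\nu_i$ is contractible while the edge holonomy (and hence the monodromy of $\KK$ on $\partial\nu_i\simeq S^1$) is trivial precisely because $z\in\glvar$, so that $H^1(\bar\nu_i,\partial\nu_i;\KK)=0$; your argument is global, identifying the restriction $H^1(M;\KK)\to H^1(M_0;\KK|_{M_0})$ with the inflation map $H^1(\pi_1(M);\slC)\to H^1(\pi_1(M_0);\slC)$ for the surjection $\pi_1(M_0)\twoheadrightarrow\pi_1(M)$ and invoking the five-term Lyndon--Hochschild--Serre sequence (inflation on $H^1$ is always injective, and here $\slC^N=\slC$ because the kernel $N$, normally generated by edge meridians, acts trivially). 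Both routes rely on the same two essential inputs---that $M_0$ retracts to a graph and that $z\in\glvar$ makes the edge meridians act trivially---but the paper packages the second input as a local statement about the tubes $\nu_i$, whereas you package it as the algebraic fact that $\Ad\ro$ factors through the quotient $\pi_1(M)$. Your approach has the advantage of handling $H^0(M;\KK_{M_0})$ with no auxiliary counting and of making the role of $\pi_1(M_0)\twoheadrightarrow\pi_1(M)$ transparent; the paper's local route generalizes more readily when one later needs to identify $H^2(M;\KK_{M,M_0})$ with $\prod_i\KK(\nu_i)$ as in \eqref{disc-isomorphism}.
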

\begin{proof}
	Since $M$ has a discrete group of isometries, we have $H^0(M;\KKsub{M})=0$, which also implies the
	vanishing of $H^0(M;\KKsub{M,M_0})$.
	$H^1(M;\KKsub{M,M_0})$ vanishes since the sheaf~$\KKsub{M,M_0}$ is supported on the disjoint,
	contractible open sets~$E_i$.
	The isomorphism $H^2(M;\KKsub{M_0})=0$ follows from the fact that $M_0$ has the
	homotopy type of a graph.
	Finally, the cohomology group~$H^0(M;\KKsub{M_0})$ vanishes because the terms on either side
	of it in \eqref{long-exact-cohomology} have already been shown to vanish.
\end{proof}
\begin{corollary}
	For every $z\in\glvar$ and the corresponding hyperbolic structure on $M$,
	we have an exact sequence
	\begin{equation}\label{reduced-long-exact}
		0 \to H^1(M;\KKsub{M}) \to H^1(M;\KKsub{M_0}) \mapswith{\Delta} H^2(M;\KKsub{M,M_0})
		\to H^2(M;\KKsub{M}) \to 0.
	\end{equation}
\end{corollary}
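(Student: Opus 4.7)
The plan is essentially to read off the claimed four-term sequence directly from the long exact cohomology sequence \eqref{long-exact-cohomology} once we feed in the vanishing results of the preceding lemma. First I would write out \eqref{long-exact-cohomology} explicitly and substitute zero for each of the five groups listed in \eqref{vanishing-cohomology-groups}. The vanishing of $H^0(M;\KK_{M,M_0})$, $H^0(M;\KK)$ and $H^0(M;\KK_{M_0})$ kills the entire top row, so the sequence starts genuinely at the connecting homomorphism landing in $H^1(M;\KK_{M,M_0})$. Since $H^1(M;\KK_{M,M_0})=0$, exactness at that term forces the map $H^1(M;\KK)\to H^1(M;\KK_{M_0})$ to be injective, giving the leftmost zero of \eqref{reduced-long-exact}.

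Next I would examine the right-hand end. The connecting map $\connecting\colon H^1(M;\KK_{M_0})\to H^2(M;\KK_{M,M_0})$ is the same map appearing in \eqref{long-exact-cohomology}; exactness at $H^1(M;\KK_{M_0})$ and at $H^2(M;\KK_{M,M_0})$ is inherited verbatim. Finally, since $H^2(M;\KK_{M_0})=0$, exactness at $H^2(M;\KK)$ shows that the map $H^2(M;\KK_{M,M_0})\to H^2(M;\KK)$ is surjective, yielding the rightmost zero of \eqref{reduced-long-exact}.

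There is no real obstacle here: the corollary is purely a matter of truncating and relabelling the long exact sequence once the vanishing lemma is granted. The only thing to check carefully is that the surviving consecutive pairs of arrows in \eqref{reduced-long-exact} inherit exactness at each intermediate term from \eqref{long-exact-cohomology}, which is automatic since no nontrivial groups have been spliced out between $H^1(M;\KK)$ and $H^2(M;\KK)$.
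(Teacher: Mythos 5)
Your proposal is correct and is exactly the argument the paper leaves implicit: the corollary follows by substituting the five vanishing groups from \eqref{vanishing-cohomology-groups} into the long exact sequence \eqref{long-exact-cohomology} and reading off the surviving four nonzero terms. Nothing more is needed, and your check that exactness at the interior terms is inherited verbatim is the right (if routine) sanity check.
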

\begin{remark}
Using the results of Porti~\cite{porti1997} and the foregoing discussion, it is
possible to determine the complex dimensions of the non-trivial terms in \eqref{reduced-long-exact}:
they equal $k$, $3N$, $3N$, and $k$, respectively.
Details can be found in \cite{rs-phd}; see~Lemma~3.4.1 and Section~4.1 in particular.
\end{remark}
The following theorem establishes a relationship between the infinitesimal
gluing equations~\eqref{Thurstons-consistency} and the exact sequence~\eqref{reduced-long-exact}.

\begin{theorem}\label{commutative-diagram-theorem}
	Let $\triang$ be an ideal triangulation of an open manifold $M$ in which all links of the ideal
	vertices are tori and let $\theta=\{\theta_1,\dotsc,\theta_k\}$ be a system of nontrivial
	oriented curves, one in	each vertex link.
	Assume that the gluing equations $\eqref{Thurstons-consistency}$ and $\eqref{Thurstons-completeness}$
	admit a positively oriented solution.
	Then there exist maps $\alpha$, $\beta$ and a neighborhood $U\subset\CC^k$ of the origin
	such that for any shapes $z$ with log-parameters $u=u(z)=(u_1,\dotsc,u_k)\in U$
	satisfying $0<|u_l|<\pi$ for all $l$,
	the following diagram is commutative with exact rows and columns.\\[0.5ex]
	\begin{tikzcd}[column sep=2em]
		& 0\arrow{d}
		& 0\arrow{d}
		& 0\arrow{d}
		& 0\arrow{d}
		\\
		  0\arrow{r}
		& T_{u(z)}U \arrow{r}{Dy} \arrow{d}{\textstyle D\paraX}
		& T_{z} \CC_{\Imag>0}^N \arrow{r}{Dg} \arrow{d}{\textstyle D\paraZ}
		& T_1(\CC^*)^N \arrow{r}{Dp} \arrow{d}{\textstyle\alpha}
		& \CC^k \arrow{r} \arrow{d}{\textstyle\beta}
		& 0\\
		0 \arrow{r}
		& H^1(M;\KKsub{M}) \arrow{r}\arrow{d}
		& H^1(M;\KKsub{M_0}) \arrow{r}{\Delta}\arrow{d}
		& H^2(M;\KKsub{M,M_0}) \arrow{r}\arrow{d}
		& H^2(M;\KKsub{M}) \arrow{r}\arrow{d}
		& 0
		\\
		& 0 \arrow{r}
		& \Coker D\paraZ \arrow{r}\arrow{d}
		& \Coker \alpha \arrow{r}\arrow{d}
		& 0
		\\
		&
		& 0
		& 0
		&
	\end{tikzcd}\\[0.3ex]
	Moreover, the maps~$\alpha$ and $\beta$ are unique.
\end{theorem}

The above theorem states in particular that we may view the derivative~$Dg$ of the consistency
equations~\eqref{Thurstons-consistency}
as the essential part of the connecting homomorphism~$\Delta$ in the cohomology long exact
sequence~\eqref{reduced-long-exact}.
We defer the proof until Section~\ref{proofsection-commut}.

\subsection{Cohomological meaning of complex lengths}
In this section, we explain the cohomological meaning of the dependence of
the completeness equations \eqref{Thurstons-completeness} on the chosen multicurve~$\theta$.
As before, we denote by $u=u(z)$ the log-parameter along $\theta$ and
consider the corresponding local
parametrization~$\paraX$ of Definition~\ref{def-paraZX}.

\begin{theorem}\label{log-parameters-cups}
For every $u=(u_1,\dotsc,u_k)\in U$ satisfying $0<|u_l|<\pi$ for all $l$, we have the
commutative diagram
\begin{equation*}
	\begin{tikzcd}
	& T_uU \arrow{r}{D\paraX}\arrow{d}{\cong}
	& H^1(M;\KKsub{M})\arrow{r}
	& H^1(M;\KKsub{\partial\overline{M}})\arrow{d}{\displaystyle\smile[\theta]^*}\\
	& \CC^k \arrow{r}{\beta}
	& H^2(M;\KKsub{M})\arrow{r}
	& H^2(M;\KKsub{\partial\overline{M}}),
	\end{tikzcd}
\end{equation*}
where $\beta$ is the map of Theorem~$\ref{commutative-diagram-theorem}$.
In the above diagram, $[\theta]^*\in H^1(\partial\overline{M},\ZZ)$ denotes the Poincar\'e
dual of the homology class of $\theta$.
\end{theorem}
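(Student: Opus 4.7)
The plan is to restrict the entire diagram to the toroidal boundary $\partial\pas{M} = \bigsqcup_{l=1}^k T_l$ and there to compute everything in group cohomology. Via the Matsushima--Murakami identification $\KK\iso\Gamma(E_{\Ad\ro})$, the restricted sheaf $\KK_{\partial\pas{M}}$ becomes on each torus~$T_l$ the locally constant sheaf of rank~$3$ corresponding to $\Ad\ro|_{\pi_1(T_l)}$. The horizontal restriction maps in the diagram then become the restrictions of group cocycles to the cusp subgroups $\pi_1(T_l)\iso\ZZ^2$, and the cup product with $[\falka\theta]^*\in H^1(\partial\pas{M};\ZZ)$ splits as a sum, over $l$, of componentwise $\ZZ\otimes\slC$ pairings in the group cohomology of each $\ZZ^2$.

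First I would reinterpret both arrows of the left column as derivatives of peripheral data. For $\xi\in T_uU$, the class $D\paraX(\xi)\in H^1(M;\KK)$ is realised by differentiating a one-parameter family of monodromy representations along the shape perturbation $Dy(\xi)$, and hence its image in $H^1(T_l;\KK_{T_l})$ is the derivative at $u$ of the peripheral holonomy restricted to $T_l$. On the other hand, at a nearby incomplete structure the holonomy of $\falka\theta_l$ is loxodromic with complex translation length $\falka u_l$, so that $D(\falka u_l\circ y)(\xi)$ extracts, at the cocycle level, the component of the infinitesimal change of $\ro(\falka\theta_l)$ living in the one-dimensional Cartan-type subalgebra of $\slC$ determined by the cusp fixed points.

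Next I would identify this extraction with cup product by $[\falka\theta_l]^*$. Choose generators of $\pi_1(T_l)\iso\ZZ^2$ with one generator freely homotopic to $\falka\theta_l$ and $\mu_l$ a complementary generator. A group 1-cocycle $z\colon\ZZ^2\to\slC$ is determined by the pair $(z(\falka\theta_l),z(\mu_l))$, and the bar resolution computes $z\smile[\falka\theta_l]^*$, evaluated on the fundamental class of $T_l$, as $z(\falka\theta_l)$ modulo a coboundary involving $z(\mu_l)$. After applying the canonical isomorphism $H^2(T_l;\KK_{T_l})\iso\CC$ given by the orientation of $T_l$ and the projection onto the translation direction of the cusp, this equals the formula for $D(\falka u_l\circ y)(\xi)$ obtained in the previous step.

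Finally, to close up the diagram I would combine the above identifications with Theorem~\ref{commutative-diagram-theorem}, which already characterises $\beta\colon\CC^k\to H^2(M;\KK)$ as implementing the combinatorial $\theta$-log-parameter pairing in the very normalisation needed to match the cohomological one on the right of the diagram. The main obstacle is the cochain-level calculation in the preceding paragraph: it demands an explicit cellular subdivision of each cusp torus which is compatible with the vertex link triangulation induced by $\triang$ and in normal position to $\falka\theta_l$, together with a careful reconciliation of the Neumann--Zagier sign and normalisation conventions for log-parameters with those implicit in the passage from \v{C}ech to group cohomology and in the Matsushima--Murakami isomorphism. The hypothesis $0<|u_l|<\pi$ enters here to guarantee that the deformed cusp holonomy stays loxodromic with a unique pair of fixed points, so that the Cartan projection used on both sides of the diagram is unambiguously defined.
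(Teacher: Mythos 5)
Your overall strategy of restricting to the cusp tori and identifying both columns with derivatives of peripheral holonomy data is aligned with the paper's proof, and your treatment of the cup product (this is essentially Lemma~\ref{little-commutative}, which the paper proves via Bromberg's isomorphism and the geometric class $[t]$ of Lemma~\ref{ts-span} rather than the bar resolution, but both are reasonable). However, the final paragraph contains a gap that undercuts the argument. You assert that Theorem~\ref{commutative-diagram-theorem} ``already characterises $\beta\colon\CC^k\to H^2(M;\KK)$ as implementing the combinatorial $\theta$-log-parameter pairing in the very normalisation needed.'' This is not what Theorem~\ref{commutative-diagram-theorem} gives you: it defines $\beta$ only abstractly, as the unique map making the rightmost square commute, and says nothing about how $\beta$ relates to the geometric cohomology generators on the cusp tori that appear in your cup-product computation.

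The substantive content of the paper's proof of Theorem~\ref{log-parameters-cups} is precisely the verification that the composition $\CC^k\xrightarrow{\beta}H^2(M;\KK)\to H^2(T_l;\KK_{T_l})$ sends the $l$th standard basis vector to the geometric class $[t_l]$. The paper establishes this by chasing the basis vectors $\partial/\partial x_i$ through the map $\alpha$ (which sends $\partial/\partial x_i$ to the translational Killing field $t(e_i,\eps_i)$ along the edge $e_i$), then splitting each edge into two oriented rays heading towards the cusps and observing that the resulting contribution to $H^2(T_l;\KK_{T_l})$ is $K_{li}[t_l]$, matching the Jacobian of the monomial map $p$. This is not a routine normalisation exercise that the commutativity in Theorem~\ref{commutative-diagram-theorem} hands you for free; it is a genuine geometric computation that must be carried out, and your proposal does not sketch it. You have correctly identified that some reconciliation of normalisations is needed, but you have located it in the wrong place (in the cup-product computation on the boundary rather than in pinning down $\beta$ itself) and you have not supplied the argument.
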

\noindent
The above theorem is proved in Section~\ref{proof-of-cups}.

\subsection{Construction of the embedding}\label{sec-alpha}
At present, we are going to construct the unique map $\alpha$ which makes
Theorem~\ref{commutative-diagram-theorem} hold.

\begin{definition}\label{def-t}
	\begin{enumerate}[(i)]
		\item
			For any connected orientable manifold~$M$,
			we denote by $\Or(M)$ the set consisting of the
			two possible orientations of $M$.
		\item
			Let $L$ be a simple geodesic in a hyperbolic \nword{3}{manifold}~$M$ and
			let $\nu(L)\subset M$ be an
			open tubular neighborhood of $L$.
			For any orientation $\varepsilon\in\Or(L)$ of $L$, we denote by
			$t(L,\varepsilon)\in\KK(\nu(L))$ the unique local Killing vector field
			which acts as a unit-speed infinitesimal translation along $L$ in the
			direction of $\varepsilon$.
	\end{enumerate}
\end{definition}
\begin{remark}\label{oppo-orient}
The existence and uniqueness
of $t(L,\varepsilon)$ both follow from the results of Nomizu~\cite[\S2]{nomizu},
who worked in the more general setting of symmetric Riemannian manifolds.
It is easy to see that $t(L,-\varepsilon)=-t(L,\varepsilon)$, where $-\varepsilon$
is the orientation opposite to $\varepsilon$.
\end{remark}

\begin{example}\label{example-zero-infinity}
Consider the geodesic~$L\subset\HH$ connecting the points
$0,\infty\in\CC{}P^1=\partial_\infty\HH$ and let $\varepsilon$ be the orientation of $L$
from $0$ towards $\infty$.
Although $t(L,\varepsilon)$ is defined a priori only on a neighborhood of $L$, the fact that $\HH$
is simply connected allows us to continue $t(L,\varepsilon)$, as a Killing field,
unambiguously onto all of $\HH$.
Using the identification $\KK(\HH)=\slC$, we can write $t(L,\varepsilon)$ as a traceless $2\times2$ matrix
with complex entries.
To find this matrix, observe that for $s\in\RR$,
the M\"obius transformation $z\mapsto e^sz$ acts on $L$ as a translation
by $s$ units towards $\infty$.
Hence,
\begin{equation}\label{calculation-of-t}
	t(L,\varepsilon) = \frac{d}{ds} \begin{bmatrix}e^{s/2} & 0\\ 0 & e^{-s/2}\end{bmatrix}_{s=0}
	= \frac{1}{2}\begin{bmatrix} 1 & 0 \\ 0 & -1 \end{bmatrix} \in\slC.
\end{equation}
\end{example}

For all edges~$e_i$ of the ideal triangulation~$\triang$,
$1\leq i \leq N$, consider the neighborhoods $E_i$ of Definition~\ref{def-M0}.
Since the sheaf $\KKsub{M,M_0}$ is supported on the disjoint open sets $E_i$, its second
cohomology group splits naturally as
\[
	H^2(M;\KKsub{M,M_0}) \cong \prod_{i=1}^N H^2(M;\KKsub{M,M_i}),
	\ \text{where}\ M_i = M\setminus E_i.
\]
By excision, to calculate the cohomology groups on the right-hand side it suffices to consider,
for each $i$, an embedded disc~$D_i$ transverse to $e_i$ and satisfying
$D_i\cap E_i = \interior D_i$.
Using cellular cochains with $\interior D_i$ as a \nword2cell, we immediately see
that any element of $H^2(M;\KKsub{M,M_i})$
is fully determined by its value on the oriented disc~$D_i$.
In other words, given an orientation $d_i\in\Or(D_i)$, cohomology classes in $H^2(M;\KKsub{M,M_i})$
are in a one-to-one correspondence with
local sections of $\KK$ on $E_i$.
Observe that an edge orientation~$\varepsilon_i\in\Or(e_i)$
determines a dual orientation $d_i\in\Or(D_i)$ by the requirement
that $d_i\wedge\varepsilon_i$ agrees with the orientation of the ambient manifold $M$.
Let $\varepsilon=(\varepsilon_1,\dotsc,\varepsilon_N)\in\prod_i\Or(e_i)$ be an arbitrary choice of orientations of the
edges of $\triang$. Applying the foregoing reasoning to all edges of $\triang$,
we obtain the isomorphism
\begin{equation}\label{disc-isomorphism}
	\auxiliaryMap_\varepsilon:\prod_{i=1}^N \KK(E_i)\mapswith{\cong} H^2(M;\KKsub{M,M_0}).
\end{equation}

We write $x_1,\dotsc,x_N$ for the coordinates on $(\CC^*)^N$; hence, the complex
tangent space $T_1(\CC^*)^N$ is spanned by the vectors
$\partial/\partial x_1,\dotsc,\partial/\partial x_N$.
Define the map $\omega_\varepsilon$ by
\begin{equation}\label{x-to-t-isomorphism}
	\omega_\varepsilon: T_1(\CC^*)^N\to \prod_{i=1}^N \KK(E_i), \qquad
	\omega_\varepsilon\left(\frac{\partial}{\partial x_i}\right)=t(e_i, \varepsilon_i)\ \text{for all}\
	1\leq i\leq N.
\end{equation}
We set $\alpha = \auxiliaryMap_\varepsilon \circ \omega_\varepsilon$.
By Remark~\ref{oppo-orient}, $\alpha$ does not depend on $\varepsilon$.
We claim that this is the map needed in Theorem~\ref{commutative-diagram-theorem}.

\subsection{Proof of Theorem~\ref{commutative-diagram-theorem}}\label{proofsection-commut}
We are now going to prove
Theorem~\ref{commutative-diagram-theorem};
we refer to \cite[\S4.3]{rs-phd} for more details and illustrations.

\begin{proof}[Proof of Theorem~\ref{commutative-diagram-theorem}]
Observe that the leftmost square in the top part of the diagram is commutative by
the definition of $\paraX$.
We shall now prove the commutativity of the central square, ie, the equality
$\alpha\circ Dg=\Delta\circ D\paraZ$.

We fix edge orientations $\varepsilon=(\varepsilon_1,\dotsc,\varepsilon_N)\in\prod_i \Or(e_i)$ arbitrarily.
As discussed in the preceding section, $\varepsilon_i$ determines a dual orientation of a transverse
disc $D_i$ for every $i$.
Denote by $\gamma_i$ the oriented boundary of $D_i$.
There exists a local orientation-preserving coordinate chart which identifies $e_i$ with
the oriented infinite geodesic $L=(0,\infty)\subset\HH$ of Example~\ref{example-zero-infinity}.
Any such chart also identifies the space of Killing fields on $E_i$ with $\KK(\HH)=\slC$.
In these coordinates, the monodromy of the hyperbolic structure
along $\gamma_i$ is a homothety~$h_s(w)=sw$ whose ratio~$s=g_i(z)$
is the product of shape parameters of the tetrahedra incident to $e_i$;
in particular, $s=1$ when $z=z_0\in\glvar$.
We choose a local logarithm~$\ell_i(z)$ so that $g_i(z)=\exp(\ell_i(z))$ and
$\ell_i(z_0)=0$ for a given $z_0\in\glvar$.
Then for any $z$ sufficiently close to $z_0$, the monodromy along $\gamma_i$ can be written in
matrix form as
\[
	\mu_i(z)=\pm
	\begin{bmatrix}
		\exp(\ell_i(z)/2) & 0 \\
		0 & \exp(-\ell_i(z)/2)
	\end{bmatrix}\in\PSL.
\]
Using Weil's method~\cite{weil-remarks} and performing a calculation similar
to~\eqref{calculation-of-t}, we see that the infinitesimal variation of the monodromy
along $\gamma_i$ with respect to $z_j$ is given by
\begin{equation}\label{Weil-differentiation}
	\frac{\partial}{\partial z_j} \mu_i(z)\Bigr|_{z=z_0}\mu_i^{-1}(z_0)
	= \frac{1}{2} \frac{\partial\ell_i(z)}{\partial z_j}\Bigr|_{z=z_0}
			\begin{bmatrix} 1 & 0 \\ 0 & -1\end{bmatrix}
	= \frac{\partial g_i(z)}{\partial z_j}\Bigr|_{z=z_0} t(e_i, \varepsilon_i),
\end{equation}
where the last equality uses $g_i(z_0)=1$.
Using the isomorphism~\eqref{disc-isomorphism} and the fact that $\gamma_i=\partial D_i$,
we see that the right-hand side of~\eqref{Weil-differentiation} is
the $i$-th component of
$\auxiliaryMap_\varepsilon^{-1}\left(\Delta\bigl(D\paraZ(\partial/\partial z_j)\bigr)\right)$.
On the other hand, using \eqref{x-to-t-isomorphism} we get
\[
	\auxiliaryMap_\varepsilon^{-1}\left(\alpha\bigl(Dg(\partial/\partial z_j)\bigr)\right)
	= \omega_\varepsilon \left(\sum_{i=1}^N \frac{\partial g_i(z)}{\partial z_j}
			\frac{\partial}{\partial x_i}\right)
	= \sum_{i=1}^N \frac{\partial g_i(z)}{\partial z_j} t(e_i,\varepsilon_i).
\]
At $z=z_0$, the $i$-th term of the above sum is exactly the right-hand
side of \eqref{Weil-differentiation}.
Hence, the middle square commutes.

Using surjectivity of $Dp$ and a standard diagram chase,
we may now define $\beta$ to be the unique map for which the rightmost square
commutes.
In this way, all squares in the top part have been shown commutative.

By Lemma~\ref{injectivity-lemma}, the map $D\paraX$ is an isomorphism.
Moreover, $\alpha$ is injective by definition.
The Four Lemma now implies that $D\paraZ$ is injective as well.

It remains to be shown that $\beta$ is an isomorphism.
We start by taking the quotient of the second row by the image of the first row,
which results in the following commutative diagram:
\begin{equation}\label{cokernel-complex-8-lemma}
	\begin{tikzcd}
	&T_{z} \CC_{\Imag>0}^N \arrow{r}{Dg} \arrow[hook]{d}{\textstyle D\paraZ}
	& T_1(\CC^*)^N \arrow[hook]{d}{\alpha}\arrow[two heads]{r}{Dp}
	& \CC^k\arrow{r}\arrow{d}{\beta}
	& 0\\
	&H^1(M;\KKsub{M_0})\arrow{r}{\Delta}\arrow[two heads]{d}
	& H^2(M;\KKsub{M,M_0})\arrow{r}\arrow[two heads]{d}
	& H^2(M;\KKsub{M})\arrow[two heads]{d}\arrow{r}
	& 0\\
	0\arrow{r}
	& \Coker D\paraZ\arrow{r}
	& \Coker\alpha\arrow{r}
	& \Coker\beta\arrow{r}
	& 0.
	\end{tikzcd}
\end{equation}
Since the bottom row of \eqref{cokernel-complex-8-lemma} is exact,
the map $\Coker D\paraZ\to\Coker\alpha$ is injective.
In order to see that this map is an isomorphism,
it suffices to compute the dimensions of its domain and codomain:
\begin{align*}
	\dim_\CC\bigl(\Coker D\paraZ\bigr)
		=\dim_\CC\bigl(H^1(M;\KKsub{M_0})\bigr) - N
		&= 2N,\\
	\dim_\CC\bigl(\Coker\alpha\bigr)
		=\dim_\CC\bigl(H^2(M;\KKsub{M,M_0})\bigr)-N
		&= 2N.
\end{align*}
This implies that $\Coker\beta=0$, i.e., that $\beta$ is surjective.
Since $\dim_\CC H^2(M;\KKsub{M})=k$, $\beta$ must in fact be an isomorphism.
\end{proof}

\subsection{Complex lengths of peripheral curves}\label{proof-of-cups}
The boundary at infinity $\partial\overline{M}$ can be pushed
into $M$, yielding a disjoint union of embedded tori~$T_1\cup\dotsb\cup T_k$,
numbered according to the chosen numbering of the ends of $M$.
When $M$ is equipped with either the complete hyperbolic structure or a small deformation of it,
we have the natural isomorphisms
\[
	H^2(M;\KKsub{M})
	\cong H^2(\partial\overline{M};\KKsub{\partial\overline{M}})
	\cong \prod_{l=1}^k H^2(T_l;\KKsub{T_l});
\]
the first isomorphism follows from the $n=3$~case of Theorem~0.1 in
\cite{porti-ferrer} and is also stated as Proposition~3.4.2 in \cite{rs-phd}.
It turns out that in the incomplete case, a basis of $H^2(T_l;\KKsub{T_l})$
can be constructed geometrically.

\begin{lemma}[cf. {\cite[Proposition~4.4.1]{rs-phd}}]\label{ts-span}
Let $T$ be a torus about an incomplete end~$v$ of an oriented hyperbolic \nword{3}{manifold} $M$ and
let $R$ be a geodesic ray traveling into $v$ with the orientation~$\varepsilon\in\Or(R)$ pointing
towards $v$.
Equip $T$ with a cell decomposition containing a single \nword2cell~$S$ and orient $S$ positively
using the orientation of $M$.
Then the \nword2cochain mapping $S$ to $t(R,\varepsilon)$ defines a non-trivial cohomology
class~$[t]\in H^2(T;\KKsub{T})$.
Moreover, $[t]$ does not depend on the choice of the ray~$R$.
\end{lemma}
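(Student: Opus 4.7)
The plan is to compute $H^2(T; \KK_T)$ explicitly by cellular cohomology with local coefficients and then locate the class of the given cochain inside this group. The cocycle condition is automatic because $T$ is two-dimensional. Taking the standard CW-structure on $T$ (one vertex, two edges corresponding to generators $\mu, \lambda$ of $\pi_1(T) \iso \ZZ^2$, and the single 2-cell $S$), the cellular complex immediately identifies
\[
H^2(T; \KK_T) \iso \slC \big/ \bigl((\Ad\rho(\mu) - 1)\slC + (\Ad\rho(\lambda) - 1)\slC\bigr),
\]
where $\rho$ is the peripheral monodromy restricted to $\pi_1(T)$.

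Since $v$ is an incomplete end, $\rho(\mu)$ and $\rho(\lambda)$ commute, share a common axis $L_v \subset \HH$, and at least one has non-trivial complex length. In the Cartan basis $\{E, H, F\}$ of $\slC$ adapted to $L_v$ (with $H$ the infinitesimal translation along $L_v$), both $\Ad\rho(\mu)$ and $\Ad\rho(\lambda)$ act diagonally, so a short direct calculation identifies the denominator above with $\CC E \oplus \CC F$. Hence $H^2(T; \KK_T) \iso \CC\cdot[H]$ is one-dimensional.

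To identify $[t(R, \eps)]$, I would develop a neighbourhood of $R$ into $\HH$, placing $L_v$ on the geodesic $(0, \infty)$ of Example~\ref{example-zero-infinity} and orienting $L_v$ so that ``towards $v$'' corresponds to $+\infty$. Since $R$ asymptotically approaches $L_v$ with $\eps$ pointing towards $v$, its developed lift terminates at $\infty \in \partial_\infty\HH$ and extends to a full geodesic whose other endpoint is some $q \in \CC$. A computation analogous to~\eqref{calculation-of-t} then yields $t(R, \eps) = H - qE \in \slC$. Projecting to the coinvariants annihilates the $E$-component, so $[t(R, \eps)] = [H] \neq 0$. The same computation applies verbatim to any other ray $R'$ into $v$ satisfying the orientation convention, establishing the independence of $[t]$ from the choice of $R$.

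The main obstacle is the orientation bookkeeping: one must show that the ambient orientation of $M$ together with the rotational part of the peripheral holonomy canonically orients $L_v$, so that the prescription ``$\eps$ points towards $v$'' unambiguously selects the endpoint $\infty$ of the developed lift for every valid ray $R$. Without this step, the class would be determined only up to sign and the independence claim would fail. Once the orientation of $L_v$ is pinned down, the cohomological computation above delivers both non-triviality and independence simultaneously.
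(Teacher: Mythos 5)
Your approach — computing $H^2(T;\KK_T)$ via cellular cohomology with local coefficients, identifying it with the $\Ad\rho$-coinvariants of $\slC$, diagonalizing the peripheral monodromy using the incompleteness hypothesis, and locating $t(R,\eps)$ modulo the $\CC E\oplus\CC F$ denominator — is essentially the same calculation as the paper's, which reduces $\KK_T$ to an affine monodromy in the Borel subgroup and identifies $t(R,\eps)$ with $\slH$ in a suitable chart. Your direct diagonalization (rather than invoking Hubbard's affine-reduction lemma) is legitimate here precisely because at an incomplete end the two commuting peripheral elements share a loxodromic axis, hence are simultaneously diagonalizable. The identification $H^2(T;\KK_T)\iso\CC\cdot[\slH]$ and the computation $t(R,\eps)=\slH-q\slE$ for a ray developing to a geodesic sharing the endpoint $\infty$ with $L_v$ are both correct, and projecting kills the $\slE$-term.

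However, you flag but do not close a real gap, and I am not convinced the fix you sketch is the right one. You assert that the developed lift of $R$ terminates at $\infty$, the common endpoint of $L_v$ and the ray, and then observe that the sign of the class hinges on this; you propose to resolve the ambiguity by canonically orienting $L_v$ from the orientation of $M$ together with the rotational part of the peripheral holonomy. The difficulty is that $\Imag\Len$ changes sign when a peripheral generator is inverted, so the "rotational part" does not by itself single out a rotation direction about $L_v$, and your proposed canonical orientation of $L_v$ is therefore not obviously well defined. What actually needs to be shown — and what the proof must contain — is that \emph{every} geodesic ray entering the incomplete end develops to the \emph{same} endpoint of $L_v$, so that the condition "$\eps$ points towards $v$" already picks the sign unambiguously without reference to an orientation of $L_v$. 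This follows from the structure of a deformed cusp (e.g. by continuity from the complete structure, where the single parabolic fixed point persists as one endpoint of $L_v$), but that argument, or a direct geometric substitute, must be supplied; as written, the independence of $[t]$ from $R$ is not established.
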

\begin{proof}
Since the projective structure on $T$ reduces to an affine
structure \cite[Lemma~2]{hubbard-monodromy},
the local system $\KKsub{T}$ can be understood in terms of the adjoint of the monodromy representation
$\pi_1(T)\to\Aff(\CC)$, where $\Aff(\CC)$ is embedded into $\PSL$ as the upper-triangular Borel
subgroup.
As in Example~\ref{example-zero-infinity}, $t(R,\varepsilon)$ is then written as the traceless
diagonal matrix $\left[\begin{smallmatrix}1/2 & 0\\ 0 & -1/2\end{smallmatrix}\right]\in\slC$.
This reduces the proof to an elementary computation, which can be found in~\cite[pp.~94-95]{rs-phd}.
\end{proof}

Suppose that $T$ is a torus about an incomplete end of $M$ and that $\gamma\subset T$ is an
oriented simple closed curve representing a non-trivial free homotopy class in $T$.
Then the monodromy~$\mu(\gamma)$ of the hyperbolic structure along $\gamma$
can be conjugated into the form
\begin{equation}\label{upper-triangularized-affine}
	\mu(\gamma) = \pm\begin{bmatrix} e^{\Len/2} & *\\ 0 & e^{-\Len/2}\end{bmatrix},
\end{equation}
where $\Len$ is called the \emph{complex length} of $\gamma$.
Assuming $\Len\not\in2\pi i\ZZ$, Bromberg~\cite[p.~25]{bromberg-rigidity} defines
an isomorphism
\begin{equation}\label{brombergs-iso}
	B: H^1(\gamma; \KKsub{\gamma})\mapswith{\cong}\CC
\end{equation}
which sends a cohomology class to the corresponding infinitesimal variation of $\Len$.
Observe that $\Len$ is not defined uniquely by $\mu(\gamma)$ alone:
even after choosing a branch of the logarithm, swapping the eigenvalues
will replace $\Len$ with $-\Len+\const$.
Hence, the derivative of $\Len$ is defined \emph{a priori} only up to sign.
On the other hand, when $\gamma$ is in normal position
with respect to a positively oriented geometric triangulation~$\triang$,
the log-parameter~$u=u_\gamma$ provides a particular choice of $\Len$.
Hence, by setting $\Len=u$ locally, we obtain a convenient choice of the isomorphism~$B$
which we characterize below.

\begin{lemma}\label{little-commutative}
If $\gamma\subset T$ is an oriented curve as above, we have a commutative diagram
\[
	\begin{tikzcd}
		H^1(T;\KKsub{T}) \arrow{r}{\textstyle\smile[\gamma]^*}\arrow{d}
		 & H^2(T;\KKsub{T})\arrow{d}{\textstyle [t]\mapsto 1}\\
		H^1(\gamma;\KKsub{\gamma})\arrow{r}{B} & \CC,
	\end{tikzcd}
\]
where $[\gamma]^*\in H^1(T;\ZZ)$ is the Poincar\'e dual of the homology class of $\gamma$ and $[t]$
is the element constructed in Lemma~\ref{ts-span}.
\end{lemma}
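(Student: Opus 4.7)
The plan is to verify commutativity by explicit computation on cellular cochains, using the affine reduction of $\KK_T$ from Lemma~\ref{ts-span} to carry out matrix calculations in $\slC$.

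First, equip $T$ with a CW decomposition having one vertex $v$, two oriented 1-cells consisting of $\gamma$ together with a simple closed curve $\delta$ satisfying $[\gamma] \cdot [\delta] = 1$, and one 2-cell $S$ attached along the commutator $\gamma\delta\gamma^{-1}\delta^{-1}$ and oriented positively with respect to the ambient orientation inherited from $M$. In this model, $[\gamma]^* \in H^1(T;\ZZ)$ is represented by the integer 1-cochain $\delta^*$ (value $0$ on $\gamma$, value $1$ on $\delta$), and $[t] \in H^2(T;\KK_T)$ is represented by the 2-cochain sending $S \mapsto t(R,\eps) = \frac{1}{2}\diag(1,-1)$ as in Example~\ref{example-zero-infinity}.

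Next, represent a class in $H^1(T;\KK_T)$ by a cocycle $c$ with values $c(\gamma), c(\delta) \in \slC$. Going down-then-right around the diagram gives $B$ applied to the restriction $c|_\gamma$, represented at the cochain level by $c(\gamma) \in \slC$. Going right-then-down, the cellular Alexander--Whitney cup product produces
\[
(c \smile \delta^*)(S) = c(\gamma) + (\text{twisted coboundary corrections}),
\]
and the map $[t] \mapsto 1$ then extracts the coefficient of $t(R,\eps)$ modulo the space of 2-coboundaries in $C^2(T;\KK_T)$. Both routes therefore reduce $c(\gamma)$ to a quotient of $\slC$ onto the one-dimensional line $\CC \cdot t(R,\eps)$.

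The central step is to check that the two quotient procedures coincide. Bromberg's isomorphism $B$ is characterized by the Weil calculation~\eqref{Weil-differentiation}: it reads off the infinitesimal variation of the complex length $\Len$ at the upper-triangularized monodromy~\eqref{upper-triangularized-affine}, which amounts to projecting $c(\gamma)$ onto the Cartan direction $t(R,\eps)$ modulo the image of $\Ad\mu(\gamma) - I$. On the cup-product side, the 2-coboundary of a 1-cochain $\phi$ evaluated on $S$ works out, via the commutator attaching map, to
\[
(\Ad\mu(\gamma) - I)\phi(\delta) - (\Ad\mu(\delta) - I)\phi(\gamma).
\]
Since $\pi_1(T) = \ZZ^2$ is abelian, $\mu(\gamma)$ and $\mu(\delta)$ share the Cartan subalgebra spanned by $t(R,\eps)$; under the hypothesis $\Len \notin 2\pi i \ZZ$ feeding into Bromberg's definition, the two summands together exhaust the complement of $\CC \cdot t(R,\eps)$ in $\slC$, and the $[t] \mapsto 1$ map agrees with $B$ on the common one-dimensional quotient.

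The main obstacle is this last comparison of quotients: carefully handling the local-coefficient Alexander--Whitney formula on the commutator 2-cell and matching the resulting twisted coboundary space to Bromberg's $(\Ad\mu(\gamma) - I)\slC$ quotient, with the correct sign and normalization conventions. Once this identification is in place, the commutativity of the diagram follows from the explicit matrix computation~\eqref{calculation-of-t} identifying the class of $t(R,\eps)$ with $1 \in \CC$ on both sides.
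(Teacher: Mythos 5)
Your proposal is correct in outline and follows a genuinely different route from the paper's proof. The paper takes a more geometric tack: it chooses a fundamental quadrilateral for $T$ with one side a lift of $\gamma$ and the basepoint at $0\in\CC$, so that $\mu(\gamma)$ becomes $z\mapsto e^\Len z$ on the orthogonal geodesic $L=(0,\infty)$; it then represents a class $h\in H^1(T;\KK_T)$ as the tangent to a holomorphic one-parameter family of projective structures $\mu_s(\gamma)=(z\mapsto e^{\Len(s)}z+c(s))$ and directly identifies $h\smile[\gamma]^*=\frac{d\Len(s)}{ds}\big|_{s=0}[t]$, which is Bromberg's $B$ by definition. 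Your argument instead works purely on the cellular cochain level: the Alexander--Whitney formula $(c\smile\delta^*)(S)=c(\gamma)$ (the second factor $\delta^*$ is untwisted, so no transport corrections arise), and then a comparison of the quotient by 2-coboundaries on the torus with the quotient $\slC/\Imag(\Ad\mu(\gamma)-I)$ appearing in Bromberg's construction. What the paper's approach buys is avoidance of the coboundary bookkeeping: the deformation-theoretic reading of $B$ and the geometric representation of $[t]$ make the commutativity visible without unwinding the coboundary on the commutator 2-cell. What your approach buys is explicitness and independence from the choice of a deformation family representing $h$.

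Two points you should spell out more carefully when writing this up. First, you invoke the shared Cartan of $\mu(\gamma)$ and $\mu(\delta)$; since both lie in the upper-triangular Borel and commute, and since $\Len\notin 2\pi i\ZZ$ forces $\mu(\gamma)$ to have two distinct fixed points, simultaneous diagonalizability holds — but $t(R,\eps)=\slH$ lies in the kernel of $\Ad\mu(\gamma)-I$ only after this diagonalization (for a mere upper-triangular $\mu(\gamma)$ with nonzero off-diagonal entry, $\Ad\mu(\gamma)$ sends $\slH$ to $\slH$ minus a multiple of $\slE$). So the identification of the two quotients should be carried out after conjugating both holonomies to diagonal form. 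Second, the sum $\Imag(\Ad\mu(\gamma)-I)+\Imag(\Ad\mu(\delta)-I)$ appearing as the space of 2-coboundaries must equal $\Imag(\Ad\mu(\gamma)-I)$ alone in order for the two one-dimensional quotients to coincide; this follows from the simultaneous diagonalization (both images are $\slE\oplus\slF$) but is not automatic from the commutativity of $\pi_1(T)$ alone, so it deserves a sentence. With these two points addressed, the computation $B(\text{class of }\slH)=1$ matching $[t]\mapsto 1$ closes the argument.
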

\begin{proof}
We can choose a fundamental quadrilateral~$Q\subset\CC$ for $T$ in such a way that
one of the sides of $Q$ is a lift of $\gamma$ with the basepoint at one of the vertices.
Moreover, we may place the basepoint vertex of $Q$ at $0\in\CC$.
This conjugates the monodromy representation $\mu:\pi_1(T)\to\Aff(\CC)$
into the form \eqref{upper-triangularized-affine}.
If $L\subset M$ is a hyperbolic geodesic intersecting $T$ orthogonally at the basepoint,
then it is clear that $\mu(\gamma)$ acts on $L$ by a translation of $\Real\Len$ and rotation
through the angle $\Imag\Len$.
In other words, if $L$ is taken to the line
$(0,\infty)\subset\HH$ in the upper-halfspace model, $\mu(\gamma)$ acts as
the M\"obius transformation $z\mapsto e^\Len z$.

Suppose now that a cohomology class $h\in H^1(T;\KKsub{T})$ is tangent to a holomorphic \nword{1}{parameter}
family of projective structures with monodromy $\mu_s(\gamma)=(z\mapsto e^{\Len(s)}z+c(s))$,
so that $\Len=\Len(0)$.
Denote by $R$ the ray formed by points of $L$ on the thin side of $T$.
Using Lemma~\ref{ts-span} with this choice of $R$, we see that
$h\smile[\gamma]^* = \frac{d\Len(s)}{ds}\big|_{s=0}[t]$ and the result follows.
\end{proof}

\begin{proof}[Proof of Theorem~\ref{log-parameters-cups}]
Observe that Theorem~\ref{log-parameters-cups} follows easily from
Lemma~\ref{little-commutative} once we prove that the composition
\[
	\CC^k \mapswith{\beta} H^2(M;\KKsub{M}) \to H^2(M;\KKsub{\partial\overline{M}})
\]
sends the $l$-th standard unit vector of $\CC^k$ to the cohomology
class~$[t_l]\in H^2(T_l;\KKsub{T_l})$ constructed in Lemma~\ref{ts-span} on the $l$-th boundary
torus~$T_l$.
To see that this is indeed the case, observe that the Jacobian matrix of the monomial map~$p$
of \eqref{def-p} is $K=[K_{li}]$, where
$K_{li}$ was defined in Section~\ref{gluing-section} as the unsigned number
of ends of the edge $e_i$ incident to the $l$-th end of $M$.
Hence, using the commutative diagram in Theorem~\ref{commutative-diagram-theorem},
it suffices to show that for every $i$, the basis vector
$\frac{\partial}{\partial x_i}\in T_1(\CC^*)^N$
is sent by the composition
\begin{equation*}\label{edge-to-torus}
	r_l: T_1(\CC^*)^N \mapswith{\alpha} H^2(M;\KKsub{M,M_0}) \to H^2(M;\KKsub{M}) \to H^2(T_l;\KKsub{T_l})
\end{equation*}
to $K_{li}[t_l]$.
To compute $r_l(\dby{x_i})$, split the edge $e_i$ into two rays $R_1$ and $R_2$ with orientations
$\varepsilon_1$, $\varepsilon_2$ pointing outwards (towards infinity).
Then $\omega_{\varepsilon_n}(\frac{\partial}{\partial x_i})=t(R_n,\varepsilon_n)$ for $n=1,2$.
Hence, if $v_l$ is the ideal vertex of $\triang$ whose link is $T_l$, then
\[
	r_l\bigl({\textstyle\frac{\partial}{\partial x_i}}\bigr)
	= \sum_{
		\substack{\text{ends of}\ e_i\\
		\text{incident to}\ v_l}
	}\mkern-10mu[t_l]
	= K_{li}[t_l],
\]
as desired.
\end{proof}

\section{Combinatorial Reidemeister torsion of 3-manifolds}
\subsection{Review of algebraic torsion}\label{torsion-review}
In this section, we briefly summarize the definition of the algebraic torsion
of a cochain complex, while referring the reader to \cite{derham-torsion,turaev}
for more details.

Suppose that $V$ is a vector space of finite dimension $n$ over a field $\FF$.
Any ordered basis $\basis{b}=(b_1,\dotsc,b_n)$ of $V$ determines a non-zero vector
$\vol(\basis{b}):=\bigwedge_{r=1}^{n} b_r\in\bigwedge^n V$
in the top-degree exterior power of $V$.
When the elements of $\basis{b}$ are not ordered, $\vol(\basis{b})$
is only defined up to sign.
Similarly, when $\basis{b'}\subset V$ is another (unordered) basis, the
ratio~$\vol(\basis{b})\big/\vol(\basis{b'})$ is a scalar well defined up to sign,
which can be computed in practice as the determinant of a change-of-basis matrix from
$\basis{b}$ to $\basis{b'}$.

Let \(
	C^\bullet =
	(0 \to C^0 \mapswith{\delta^0} C^{1} \mapswith{\delta^1} \dotsb
	\mapswith{\delta^{d-2}} C^{d-1} \mapswith{\delta^{d-1}} C^d \to 0)
\)
be a finite-dimensional cochain complex over $\FF$
in which each cochain group $C^i$ is equipped with a preferred basis $\basis{c^i}$.
Since we are working over a field, the short exact sequence
\[
	0 \to Z^i \mapswith{\subseteq} C^i \mapswith{\delta^{i}} B^{i+1} \to 0
\]
always has a splitting $s^i: B^{i+1}\to C^i$.
Denote by $\basis{h^i}\subset H^i=Z^i/B^i$ an arbitrarily fixed basis of
the $i$-th cohomology group of $C^\bullet$.
Note that given any collection of bases $\basis{b^i}\subset B^i$ for each $i$,
we can form a new basis of $C^i$ defined as
$\basis{b^i}\cup\basis{\widetilde{h^i}}\cup s^i(\basis{b^{i+1}})$, where
$\basis{\widetilde{h^i}}$ consists of cocycles representing the cohomology classes of
the elements of $\basis{h^i}$.
The combinatorial torsion of $C^\bullet$ is then defined as
\begin{equation}\label{tordef}
	\Tor(C^\bullet, \basis{c^\bullet}, \basis{h^\bullet})
	= \pm\prod_{i\ \text{even}}
		\frac{\vol\bigl(\basis{b^i}\cup\basis{\widetilde{h^i}}\cup s^i(\basis{b^{i+1}})\bigr)}
			{\vol\bigl(\basis{c^i}\bigr)}
	\prod_{i\ \text{odd}}
		\frac{\vol\bigl(\basis{c^i}\bigr)}
		{\vol\bigl(\basis{b^i}\cup\basis{\widetilde{h^i}}\cup s^i(\basis{b^{i+1}})\bigr)}
	\in\FF^*/\{\pm 1\},
\end{equation}
and only depends on the choice of the bases $\basis{c^\bullet}$ and $\basis{h^\bullet}$.

When the above construction is applied to a cellular cochain complex of a topological space, the
resulting combinatorial invariant of cell complexes
is called the \emph{combinatorial Reidemeister torsion}.
For further generalization to modules over non-commutative rings,
see Milnor~\cite{milnor-whitehead}.

We shall also need the notion of compatible bases introduced in \cite{milnor-whitehead}.
Suppose that
\begin{equation}\label{basic-ses}
	0 \to A \mapswith{\iota}  B \mapswith{\pi} C \to 0
\end{equation}
is a short exact sequence of finite-dimensional vector spaces.
We say that the bases~$\basis{a}\subset A$, $\basis{b}\subset B$, $\basis{c}\subset C$
are \emph{compatible} if the torsion of \eqref{basic-ses} with respect to these
bases equals $\pm1$.
More generally, if $A$, $B$ and $C$ are cochain complexes and $\iota$, $\pi$ cochain maps,
we say that the bases~$\basis{a}$, $\basis{b}$, $\basis{c}$ are \emph{graded-compatible}
if their degree $d$ parts~$\basis{a}^d$, $\basis{b}^d$, $\basis{c}^d$ are compatible
for every $d$.

\subsection{Definition of the adjoint torsion}
The adjoint hyperbolic torsion $\ATor$ was first defined by Porti in~\cite{porti1997}.
While Porti's original treatment was in terms of homology groups, our approach
using cohomology is equivalent \cite{turaev}.

Let $M$ be as in Section~\ref{gluing-section} and
let $\varrho$ be a monodromy representation of the hyperbolic structure on $M$.
We equip $M$ with an arbitrary finite CW-decomposition and consider the finite-dimensional
cellular cochain complex $C^\bullet(M;E)\cong C^\bullet(M; \slC^{\Ad \varrho})$ with twisted
\nword{\slC}{coefficients}.
This complex can be constructed as $\Hom_{\ZZ[\pi_1(M)]}(C^\bullet(\widetilde{M};\ZZ),\slC)$,
where $\pi_1(M)$ acts on the universal covering space $\widetilde{M}$ by deck transformations
and on $\slC$ via $\Ad\varrho$.
As stated in \cite{porti1997,rs-phd}, the cohomology groups of this complex are
\[
	H^1(M; \slC^{\Ad \varrho})\cong H^2(M; \slC^{\Ad \varrho}) \cong \CC^k
\]
and vanish in all other degrees.

In order to take the combinatorial torsion of $C^\bullet(M; \slC^{\Ad \varrho})$, we
equip it with a \emph{geometric basis} which can be constructed as follows.
Let $\basis{b}=\{b_1,b_2,b_3\}\subset\slC$ be an arbitrarily chosen basis.
The cellular structure of $M$ determines a $\pi_1(M)$-invariant cell decomposition of $\widetilde{M}$.
For any oriented cell~$s$ in $M$, choose a lift~$\widetilde{s}$ of $s$
to $\widetilde{M}$ and form the three cochains $c_{s,r}$ ($r=1,2,3$) defined by
$c_{s,r}(\widetilde{s})=b_r$ and $c_{s,r}(f)=0$ if the cell~$f$ is not a lift of $s$.
Then define
\begin{equation}\label{geometric-basis}
	\basis{c}^\bullet_{\text{geom}}
	= \bigcup_{s:\ \text{a cell of}\ M} \mkern-10mu \{c_{s,1},c_{s,2}, c_{s,3}\}
	\subset C^\bullet(M; \slC^{\Ad \varrho}),
\end{equation}
which is easily seen to be a basis.

\begin{definition}\label{Ad-torsion}
Let $\gamma=(\gamma_1,\dotsc,\gamma_k)\subset \partial\overline{M}$ be a multicurve consisting of
oriented, homotopically non-trivial simple closed curves, one in each torus component of
$\partial\overline{M}$.
\begin{enumerate}[(i)]
\item\label{balanced}
A cohomology basis
$\basis{h^1}\cup \basis{h^2}\subset H^1(M; \slC^{\Ad \varrho})\oplus H^2(M; \slC^{\Ad \varrho})$
is said to be \emph{balanced with respect to $\gamma$} if the images of $\basis{h^1}$ and
$\basis{h^2}$ under the compositions
\[
	\begin{tikzcd}
	\basis{h^1} \subset H^1(M; \slC^{\Ad \varrho}) \arrow{r}{}
	& H^1(\partial M; \slC^{\Ad \varrho}) \arrow{d}{\textstyle\smile[\gamma]^*}\\
	\basis{h^2} \subset H^2(M; \slC^{\Ad \varrho}) \arrow{r}{}
	& H^2(\partial M; \slC^{\Ad \varrho})
	\end{tikzcd}
\]
coincide.
In the above diagram, the horizontal maps are induced by restriction of local systems
and the vertical map is given by the cup product with the Poincar\'e
dual~$[\gamma]^*\in H^1(\partial M; \ZZ)$ of the homology class of $\gamma$.
\item
The \emph{adjoint Reidemeister torsion} of $(M,\gamma)$ is defined by
\[
	\ATor(M, \gamma) := \Tor\bigl(C^\bullet(M; \slC^{\Ad \varrho}),
				\basis{c}^\bullet_{\text{\rm geom}},
				\basis{h^\bullet}(\gamma)\bigr)
			\in\CC^*/\{\pm1\},
\]
where $\basis{c}^\bullet_{\text{geom}}$ is any geometric basis constructed by the way of
\eqref{geometric-basis} and $\basis{h^\bullet}(\gamma)$ is any cohomology basis balanced with
respect to $\gamma$.
\end{enumerate}
\end{definition}
It is well known that the quantity $\ATor(M, \gamma)$ does not depend on the choice of a geometric
basis; cf.~\cite{porti1997}.
Note that Part~\eqref{balanced} of the above definition is an adaptation of Porti's original
construction to twisted cochain complexes.

\subsection{Geometric construction of geometric bases}\label{geom-bases}
We shall now indicate how to reformulate Definition~\ref{Ad-torsion} in terms of the sheaf~$\KKsub{M}$
of germs of Killing vector fields on $M$.
As mentioned in the Introduction, the local system $\slC^{\Ad\varrho}$ is isomorphic
to the sheaf~$\KKsub{M}$.
Hence, Part~\eqref{balanced} of Definition~\ref{Ad-torsion} does not require any adaptations
beyond replacing ``$\slC^{\Ad\varrho}$'' with ``$\KKsub{M}$'' throughout.
The definition of a geometric basis \eqref{geometric-basis}
can also be restated in a simple way which we now describe.

Given an ordered basis $\basis{b}=(b_1,b_2,b_3)\subset\slC$, \eqref{explicit-E}
implies that the non-zero element $\vol(\basis{b})\in\bigwedge^3\slC$
defines a non-vanishing section of the bundle $\bigwedge^3E$.
Since the adjoint action of $\PSL$ is unimodular, $\bigwedge^3E$ is trivial as a flat bundle
and $\vol(\basis{b})$ can be viewed as a global, constant section of $\bigwedge^3E$.

According to \eqref{geometric-basis}, a geometric basis contains, for every oriented
cell~$s$, three cochains $c_{s,r}$ ($r=1,2,3$) such that
$\bigwedge_r c_{s,r}(\widetilde{s})=\vol(\basis{b})$
as sections of $\bigwedge^3E$.
Using Steenrod's definition~\cite{steenrod1943} of cellular cochains
with coefficients in a local system, we can interpret this equality in terms of
the cellular cochain complex~$C^\bullet(M;\KKsub{M})$.
For every oriented cell~$s$, consider three germs~$X_r\in\KKsub{x}$ ($r=1,2,3$)
such that $\bigwedge_r X_r = \vol(\basis{b})_x$, where $x\in s$ is an arbitrarily
chosen ``representative point''.
It is clear that replacing $c_{s,r}$ with cochains~$c'_{s,r}$ mapping~$s$ to $X_r$
for every $s$ produces a basis~$\basis{c'}\subset C^\bullet(M;\KKsub{M})$.
For every degree~$d$, the identification of local systems~$\slC^{\Ad\varrho}\cong\KKsub{M}$
induces an isomorphism of top exterior
powers $\bigwedge C^d(M;\slC^{\Ad\varrho})\mapswith{\cong}\bigwedge C^d(M;\KKsub{M})$.
It is easy to see that this isomorphism relates $\basis{c}^d_{\text{geom}}$ to the
degree~$d$ part of $\basis{c'}$.
Hence, we have the equality of torsions
$\Tor\bigl(C^\bullet(M;\KKsub{M}), \basis{c'}, \basis{h}\bigr) =
\Tor\bigl(C^\bullet(M;\slC^{\Ad\varrho}), \basis{c}^\bullet_{\text{geom}}, \basis{h}\bigr)$.

In order to make the construction of the germs~$X_r$ more geometric,
fix a cell~$s$ of $M$ and a representative point $x \in s$.
Let $U\subset M$ be a neighborhood of $x$ and let $\varphi: U\to\HH$
be an orientation-preserving geometric coordinate chart.
Then $\varphi$ establishes an isomorphism~$\KKsub{x}\cong\KKsub{\varphi(x)}$ of germ spaces and
hence an isomorphism of their top exterior powers $\bigwedge^3\KKsub{x} \cong \bigwedge^3\KKsub{\varphi(x)}$.
Since a germ of a Killing field at a point of $\HH$ extends to a unique global Killing field,
we obtain an isomorphism
\[
	\Phi: \bigwedge\nolimits^3\KKsub{x} \mapswith{\cong} \bigwedge\nolimits^3\slC.
\]
By unimodularity of the adjoint action of $\PSL=\Isom^+(\HH)$, we see that $\Phi$ does not
depend on the choice of the geometric chart $\varphi$.
Moreover, once $\Phi$ is defined at an~$x\in s$, it can be uniquely continued to any other point
of $s$.

In conclusion, we arrive at the following equivalent definition of a geometric basis of the
cellular cochain complex~$C^\bullet(M;\KKsub{M})$.
Fix a basis~$\basis{b}=\{b_1,b_2,b_3\}$ of $\slC\cong\KK(\HH)$.
For any oriented cell~$s$ of $M$ and any representative point $x\in s$,
choose an arbitrary orientation-preserving geometric chart $\varphi$ as above and define
\begin{equation}\label{really-geometric-basis}
	\basis{c}^\bullet_{\text{geom}}
	= \bigcup_{s:\ \text{a cell of}\ M} \bigcup_{r} \left\{s\mapsto (\varphi^{-1})_*(b_r)\right\}
	\subset C^\bullet(M; \KKsub{M}),
\end{equation}
where $(\cdot)_*$ denotes the push-forward of vector fields.
We remark that a similar coordinate-free interpretation of geometric bases exists for unimodular
flat bundles of any rank \cite{nicolaescu-book,rs-phd}.

\subsection{Generalized 1-loop Conjecture}
Assume for the entirety of this section that $\triang$ is a geometric ideal triangulation
of a hyperbolic \nword{3}{manifold} $M$ with one cusp ($k=1$).
Adopting notations of normal surface theory,
we shall often write $z_j^\square$ for the unique of the three shape
parameters $z_j$, $z'_j$, $z''_j$ labeling the edges of the $j$-th tetrahedron of $\triang$
which are separated by the normal quadrilateral $\square$.
We extend this notation to all quantities corresponding bijectively to
normal quadrilaterals.

\begin{definition}\label{flattening}
A \emph{strong combinatorial flattening} on $\triang$ is
a vector~$(f, f', f'')\in \bigl(\ZZ^{N}\bigr)^3$ satisfying the equations
\begin{align}
	Gf + G'f' + G''f'' &= (2, 2, \dotsc, 2)^\top,\\
	f + f' + f'' &= (1,1, \dotsc, 1)^\top,\\
	Cf + C'f' + C''f'' &= 0,\label{peripheral-flattening-condition}
\end{align}
whenever the rows of the matrices $C^\square$ contain the coefficients of the completeness
equations~\eqref{Thurstons-completeness} along any nontrivial peripheral curves.
\end{definition}

We remark that strong combinatorial flattenings always exist, thanks to a result by
Neumann~\cite[Theorem~2]{neumann1990}.
By linearity, it suffices to check condition \eqref{peripheral-flattening-condition}
on a \nword{\ZZ}{basis} of $H_1(\partial\overline{M};\ZZ)$.
If $M$ is a knot complement in $S^3$, one may use the
knot-theoretic meridian--longitude pair, as is done
by Dimofte and Garoufalidis in \cite{tudor-stavros}.
Note that \cite{tudor-stavros} uses the weaker term ``flattening'' to refer to
a vector~$f$ satisfying \eqref{peripheral-flattening-condition} only for a single peripheral curve,
and the term ``flattening compatible with longitude'' to describe strong combinatorial flattenings
in our sense.

The `\nword{1}{loop} Conjecture' of Dimofte--Garoufalidis~\cite[Conjecture~1.8]{tudor-stavros}
can be stated using the notations of Section~\ref{gluing-section} as follows.
Let $\theta$ be an oriented, homotopically nontrivial simple closed peripheral curve
in normal position with respect to the triangulation~$\triang$.
Define the $N\times N$ integer matrices $\hat{G}, \hat{G}', \hat{G}''$ by
\begin{equation}\label{def-hats}
	\hat{G}^\square_{ij} = \begin{cases}
	G^\square_{ij} & \text{when}\ 1\leq i \leq N-k\\
	C^\square_{i-N+k,j} & \text{when}\ i>N-k,
	\end{cases}
\end{equation}
where $C^\square$ contains the coefficients of the completeness
equation~\eqref{Thurstons-completeness} along $\theta$.
Since we are only considering the case of~$k=1$ here,
the matrices~$\hat{G}^\square$ differ from $G^\square$ only in their last
rows, but see Remark~\ref{long-remark}\nobreakdash-\eqref{case:k>1} below.
For any $j\in\{1,\dotsc,N\}$, we define the following rational functions of $z\in\CC_{\Imag>0}^N$:
\begin{equation}\label{def-zetas}
	\zeta_j(z) = \frac{d\log z_j}{dz_j} = \frac{1}{z_j},\quad
	\zeta'_j(z) = \frac{d\log z'_j}{dz_j}= \frac{1}{1-z_j},\quad
	\zeta''_j(z) = \frac{d\log z''_j}{dz_j} = \frac{1}{z_j(z_j-1)}.
\end{equation}

\begin{conjecture}[The 1-loop Conjecture]\label{1-loop}
For any strong combinatorial flattening $(f, f', f'')$,
\begin{equation}\label{1-loop-formula}
	\ATor(M, \theta) = \pm\frac{\det\bigl(
			\hat{G}\diag(\zeta) + \hat{G}'\diag(\zeta') + \hat{G}''\diag(\zeta'')
			\bigr)}{2^k\;{\zeta}^{f}{\zeta'}^{f'}{\zeta''}^{f''}}.
\end{equation}
\end{conjecture}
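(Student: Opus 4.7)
The plan is to use the sheaf short exact sequence \eqref{intro:sheaves} to factor the adjoint torsion and to identify the numerator of \eqref{1-loop-formula} as the ``torsion of the connecting long exact sequence'' \eqref{reduced-long-exact}, with the strong flattening data absorbing the bookkeeping needed to pass between the geometric basis and the basis naturally adapted to the sheaf filtration. Since the statement is a conjecture, the realistic goal is to derive the determinant factor directly from the cohomological machinery and to reduce the full identity to a finite numerical check that can be carried out on individual examples such as \texttt{m003}.

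First I would apply the multiplicativity of combinatorial Reidemeister torsion to the short exact sequence of cellular cochain complexes
\[
	0 \to C^\bullet(M; \KK_{M,M_0}) \to C^\bullet(M; \KK) \to C^\bullet(M; \KK_{M_0}) \to 0
\]
obtained from \eqref{intro:sheaves} after fixing a cellular decomposition of $M$ adapted to the pair $(M, M_0)$ and the basis identification $\KK\iso\slC^{\Ad\ro}$. Using graded-compatible bases built from the geometric basis \eqref{really-geometric-basis}, the torsion $\ATor(M,\theta)$ factors as the product of the torsions of the outer complexes and a correction factor equal to the torsion of \eqref{reduced-long-exact} with respect to the induced cohomology bases.

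Next I would evaluate each factor. The complex $C^\bullet(M;\KK_{M,M_0})$ is supported on the disjoint tubular neighbourhoods $\nu_i$, so by excision together with \eqref{disc-isomorphism} and the Killing fields $t(e_i,\eps_i)$ from Example~\ref{example-zero-infinity}, it contributes a product of explicit local edge terms. The complex $C^\bullet(M;\KK_{M_0})$, up to homotopy, is the complex of the tetrapod graphs from Figure~\ref{fig:Tetrahedron} with $\KK$-coefficients, and its torsion evaluates to a monomial in the rational functions $\zeta_j, \zeta'_j, \zeta''_j$ of \eqref{def-zetas}, with exponents dictated by the strong flattening $(f,f',f'')$. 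Together these yield the denominator $2^k\,\zeta^{f}(\zeta')^{f'}(\zeta'')^{f''}$. The correction factor is the determinant of the connecting map $\connecting$, which by Theorem~\ref{commutative-diagram-theorem} is identified via the vertical isomorphisms $D\paraZ$ and $\alpha$ with the derivative $Dg$ of Thurston's gluing map. Augmenting $Dg$ by the completeness row along $\theta$, as dictated by Theorem~\ref{log-parameters-cups} and the flattening condition \eqref{peripheral-flattening-condition}, produces exactly the square matrix $\hat{G}\diag(\zeta)+\hat{G}'\diag(\zeta')+\hat{G}''\diag(\zeta'')$ of \eqref{def-hats}.

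The hard part will be the normalization bookkeeping. The sheaf decomposition is not a priori compatible with the cellular geometric basis $\basis{c}^\bullet_{\text{geom}}$, and establishing graded-compatibility requires a concrete choice of cochain-level splitting of \eqref{intro:sheaves}. The three conditions of Definition~\ref{flattening} encode precisely the algebraic constraints this splitting must satisfy so that the resulting discrepancy matches the denominator of \eqref{1-loop-formula}: the edge condition ensures cancellation around each edge, the tetrahedral condition ensures cancellation within each tetrapod, and the peripheral condition ensures the completeness row does not introduce spurious powers. The constant $2^k$ enters through the ratio between the natural basis $\{[t_l]\}$ of $H^2(\partial_\infty M;\KK)$ given by Lemma~\ref{ts-span} and the trace-squared coordinates $\tr^2(\theta_l)=4\cosh^2(u_l/2)$ of Lemma~\ref{injectivity-lemma}, which differ by a factor of $2$ per cusp via the identification of Theorem~\ref{log-parameters-cups}. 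After this reduction, the remaining content of the conjecture is an identity between two explicit rational functions of the shape parameters, which I would verify numerically for \texttt{m003} as the final step.
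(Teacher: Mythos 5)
Your overall route matches the paper's: factor $\ATor(M,\theta)$ by Milnor multiplicativity along the pair $(M,M_0)$, identify the determinant in the numerator with the connecting homomorphism via Theorem~\ref{commutative-diagram-theorem}, and reduce the full identity to a residual rational check. But the middle of your sketch overclaims the crucial step, in a way that contradicts your own opening remark that the realistic goal is only a reduction. You assert that $\Tor\bigl(C^\bullet(M;\KK_{M_0})\bigr)$ ``evaluates to a monomial in the $\zeta_j,\zeta'_j,\zeta''_j$ with exponents dictated by the strong flattening,'' and then argue that the three conditions of Definition~\ref{flattening} encode precisely the compatibility constraints on the cochain-level splitting. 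That is exactly the part the paper cannot prove. It is isolated there as the Reduced \nword1loop Conjecture (Conjecture~\ref{reduced-conjecture}); Theorem~\ref{reduction-theorem} establishes only an implication, and the paper explicitly leaves open whether $\Tor_2$ can ``always be written in the form \eqref{reduced-formula}.'' The flattening conditions guarantee that the right-hand side of \eqref{1-loop-formula} is well-defined, but nothing in your argument shows they control the torsion of the tetrapod complex. Your closing ``numerical check for \texttt{m003}'' is therefore not a finishing formality but the entirety of the verification the paper actually carries out.

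There is also a concrete slip in the normalization. You locate the $2^k$ in the ratio between the cohomology basis $\{[t_l]\}$ of Lemma~\ref{ts-span} and the trace-squared coordinates $\tr^2(\theta_l)=4\cosh^2(u_l/2)$ of Lemma~\ref{injectivity-lemma}. In the paper's computation (Appendix~\ref{torsion-Choi}), the $\frac12$ instead comes from the Jacobian of the monomial map $p$ of \eqref{def-p} being the row vector $[2\ 2\ \cdots\ 2]$ --- every edge has both ends at the single cusp --- forcing $\frac12\dby{x_N}$ as the preimage of $1\in\CC$ and producing the $\frac12$ entry in the change-of-basis matrix $A_2$ of \eqref{A2-matrix}. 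Relatedly, the paper's finer two-stage decomposition sets both the relative complex torsion and the cokernel complex torsion to $\pm1$ by explicit compatible basis choices, so that the numerator arises from the torsion of the gluing complex \eqref{Choi-choice}, not directly from the torsion of the long exact sequence as your factorization suggests; keeping these attributions straight is what localizes the open problem in $\Tor_2=\Tor\bigl(C^\bullet(X^{(1)};\KK)\bigr)$ alone.
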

In the formula \eqref{1-loop-formula}, the manifold $M$ is considered
with the hyperbolic structure defined by the shape parameters $z=(z_1,\dotsc,z_N)\in\glvar$
which enter the right-hand side via the functions $\zeta^\square(z)$ of \eqref{def-zetas}.
The denominator uses multi-index notation.

\begin{remark}\label{long-remark}
	\begin{enumerate}[(1)]
		\item
		We remark that our statement of the \nword1loop Conjecture extends the original
		conjecture of Dimofte--Garoufalidis~\cite{tudor-stavros} to the case of arbitrary
		\nword1cusped hyperbolic manifolds and arbitrary nontrivial peripheral curves.
 		\item
		The conjectural formula in \cite{tudor-stavros} is given in terms of the matrices
		$\mathbf{A}=\hat{G}-\hat{G}'$ and $\mathbf{B}=\hat{G}''-\hat{G}'$, but it can be
		easily transformed into the symmetric form presented above.
		The details of this transformation are spelled out in \cite[\S5.1.3]{rs-phd}.
		\item
		Dimofte--Garoufalidis~\cite{tudor-stavros} use a somewhat weaker
		concept of a combinatorial flattening and prove that
		the right-hand side of \eqref{1-loop-formula} does not depend on the choice of
		the flattening when the shapes $z$ recover the complete hyperbolic structure of
		finite volume on $M$, but subsequently need strong combinatorial flattenings for
		full invariance.
		Since strong combinatorial flattenings always exists,
		there is no loss of generality in using them in the statement.
		\item\label{case:k>1}
		Since $k=1$, the term $2^k$ in the denominator of \eqref{1-loop-formula} simply
		equals $2$.
		Under certain additional assumptions on $\triang$, a generalization of
		the \nword{1}{loop} Conjecture to the case of multiple toroidal ends ($k>1$) is
		discussed in \cite[Section~5.5.2]{rs-phd}.
		This generalization involves the factor $2^k$ and the
		matrices~$\hat{G}^\square$ defined by \eqref{def-hats} with a general $k$.
	\end{enumerate}
\end{remark}

\section{Geometric computation of the torsion}\label{torsion-section}
\subsection{Factorization of torsion with respect to an ideal triangulation}\label{dual-complex}
A positively oriented geometric ideal triangulation~$\triang$ of a finite-volume
hyperbolic \nword3manifold~$M$ defines a finite cell complex~$X$ dual to $\triang$.
As usual, the \nword0cells of $X$ are in a bijective correspondence with the tetrahedra of
$\triang$.
There is a \nword1cell in $X$ for every face of the triangulation and a \nword2cell for every edge.
In this way, the \nword2dimensional CW-complex $X$ represents the homotopy type of $M$.

We shall use the cochain complex $C^\bullet(X;\KKsub{X})$ to calculate the adjoint
torsion~$\ATor(M,\theta)$.
Observe that the subspace~$M_0\subset M$ corresponds to the \nword1skeleton~$X^{(1)}\subset X$,
so that the long exact sequence~\eqref{reduced-long-exact} can be constructed in cellular
cohomology of $\KKsub{X}$ as the long exact sequence of the pair $(X,X^{(1)})$.

We fix the basis $\{\slE, \slH, \slF\}\subset\slC$, where
\(
	\slE = \left[\begin{smallmatrix}0 & 1 \\ 0 & 0\end{smallmatrix}\right],
	\slH = \frac12\left[\begin{smallmatrix}1 & 0 \\ 0 & -1\end{smallmatrix}\right],
	\slF = \left[\begin{smallmatrix}0 & 0 \\ 1 & 0\end{smallmatrix}\right].
\)
We are now going to construct a particularly convenient class of geometric
bases~$\basis{c}^\bullet_{\text{geom}}$ of the cellular cochain complex~$C^\bullet(X;\KKsub{X})$.
Given a \nword2cell $s_i$ of $X$ dual to an edge~$e_i$ of $\triang$,
choose a local coordinate chart $\varphi_i$ which maps the edge $e_i$
to the geodesic $(0,\infty)\subset\HH$.
We then define a geometric basis of $C^2(X;\KKsub{X})$ using \eqref{really-geometric-basis}
with $\varphi=\varphi_i$ whenever $s=s_i$, $1\leq i\leq N$.
Thanks to the factor of $\frac{1}{2}$ in the definition of $\slH$,
equation \eqref{calculation-of-t} shows that the local Killing field $(\varphi_i^{-1})_*(\slH)$
equals $t(e_i,\varepsilon)$, where the orientation $\varepsilon\in\Or(e_i)$
is dual to the orientation of $s_i$.
From now on, we assume that the geometric basis~$\basis{c}^\bullet_{\text{geom}}$ is constructed as
above.
We do not impose any special requirements on the charts $\varphi$ for cells in dimensions zero and one.

Applying Milnor's Multiplicativity Theorem~\cite[Theorem~3.2]{milnor-whitehead} to the short
exact sequence of cochain complexes given by the pair $(X,X^{(1)})$,
we obtain the decomposition
\begin{multline}\label{first-decomposition}
	\ATor(M,\theta)
	= \Tor\bigl( C^\bullet(X; \KK), \basis{c}^\bullet_{\text{geom}}, \basis{h}^\bullet(\theta)\bigr)\\
	= \Tor\bigl( C^\bullet(X^{(1)};\KK), \basis{c}^{(0,1)}_{\text{geom}}, \basis{h}^1_0\bigr)
	\Tor\bigl(C^\bullet(X, X^{(1)};\KK), \basis{c}^{(2)}_{\text{geom}}, \basis{h}^2_{\text{rel}}\bigr)
	\Tor\bigl(\mathcal{H}^\bullet,
		\basis{h}^\bullet(\theta)\cup\basis{h}^1_0\cup\basis{h}^2_{\text{rel}},\varnothing\bigr),
\end{multline}
where $\basis{h}^\bullet(\theta)$ is any cohomology basis balanced with respect to $\theta$,
$\basis{h}^1_0$ is any basis of $H^1(X^{(1)};\KK)=H^1(M;\KKsub{M_0})$ and
$\basis{h}^2_{\text{rel}}$ is any basis of $H^2(X,X^{(1)};\KK)=H^2(M;\KKsub{M,M_0})$.
In the above formula, the symbol~$\mathcal{H}^\bullet$ denotes the cohomology long exact
sequence~\eqref{reduced-long-exact}.
Note that the bases $\basis{h}^1_0$ and $\basis{h}^2_{\text{rel}}$ occurring in
\eqref{first-decomposition} can be chosen arbitrarily, but
we shall indicate a particularly good choice in Section~\ref{good-bases}.

Suppose that the log-parameter $u=u_\theta$ satisfies $0<|u_l|<\pi$ for all $l$,
so that the incomplete hyperbolic structure on $M$ is a small deformation of the unique complete structure.
By Theorem~\ref{commutative-diagram-theorem}, we can apply Milnor's theorem to the last
term  of \eqref{first-decomposition}, obtaining
\begin{equation}\label{second-decomposition}
\Tor\bigl(\mathcal{H}^\bullet,
	\basis{h}^\bullet(\theta)\cup\basis{h}^1_0\cup\basis{h}^2_{\text{rel}},\varnothing\bigr)
= \Tor\bigl(\mathcal{G}^\bullet, \Choibasis,\varnothing\bigr)
  \Tor\bigl(\Coker D\paraZ \to \Coker\alpha, \basis{q^\bullet},\varnothing\bigr),
\end{equation}
where $\mathcal{G}^\bullet$ is the `gluing complex' \eqref{Choi-choice} and the bases~$\Choibasis$,
$\basis{q}^\bullet$ must be chosen so as to satisfy the graded compatibility assumptions of Milnor's
theorem.
We construct such bases in the next section.

\subsection{Construction of graded-compatible bases}\label{good-bases}
We wish to equip the gluing complex~$\mathcal{G}^\bullet$ of \eqref{Choi-choice}
with a basis $\Choibasis$ given by the partial derivatives with respect to the standard coordinates.
More precisely, take $1\in\CC$ as the basis vector of the last term $\CC^k=\CC$
and pick the standard basis vectors
$\partial/\partial u \in T_u U$,
$\partial/\partial z_1, \dotsc, \partial/\partial z_N\in T_z\CC_{\Imag>0}^N$,
$\partial/\partial x_1,\dotsc \partial/\partial x_N\in T_1(\CC^*)^N$.
This defines the basis~$\Choibasis$ which we use in \eqref{second-decomposition}.

Observe that the maps~$D\paraX$ and~$\beta$ of Theorem~\ref{commutative-diagram-theorem}
define cohomology basis vectors
\begin{equation}\label{Choi-balanced}
D\paraX\bigl({\textstyle\frac{\partial}{\partial u}}\bigr)\in H^1(M;\KKsub{M})
\quad\text{and}\quad
\beta(1)\in H^2(M;\KKsub{M}).
\end{equation}
An application of Theorem~\ref{log-parameters-cups}
implies that the vectors~\eqref{Choi-balanced} form a cohomology
basis balanced with respect to the curve $\theta$ in the sense of
Definition~\ref{Ad-torsion}-\eqref{balanced}.
We therefore define $\basis{h}^\bullet(\theta)$ to consist of the elements given in \eqref{Choi-balanced}.

Next, we define the basis $\basis{h}^2_{\text{rel}}\subset H^2(M;\KKsub{M,M_0})$ as the image of
$\basis{c}^{(2)}_{\text{geom}}$ under the canonical
isomorphism~$C^2(X,X^{(1)};\KK)\cong H^2(M;\KKsub{M,M_0})$.
This choice of bases ensures that
\begin{equation}\label{relative-1}
	\Tor\bigl(C^\bullet(X, X^{(1)};\KK),\basis{c}^{(2)}_{\text{geom}},\basis{h}^2_{\text{rel}} \bigr)
	=\pm 1.
\end{equation}
By our assumption on the choice of the local charts $\varphi_i$, we see that
the set~$\bigl\{\alpha\bigl({\textstyle\frac{\partial}{\partial x_i}}\bigr)\bigr\}_{i=1}^N$
is a subset of $\basis{h}^2_{\text{rel}}$.
The remaining part of $\basis{h}^2_{\text{rel}}$ must therefore descend to a basis
of $\Coker\alpha$, which we denote by $\basis{q^2}$.
Using the notations of \eqref{really-geometric-basis},
we can describe the basis~$\basis{q^2}$ explicitly: it is induced by cochains
of the form $s\mapsto(\varphi^{-1})_*(\slE)$, $s\mapsto(\varphi^{-1})_*(\slF)$,
where $s$ runs over the set of oriented \nword{2}{cells} dual to edges of $\triang$.
In this way, the three bases~$\bigl\{\frac{\partial}{\partial x_i}\bigr\}_i$, $\basis{h}^2_{\text{rel}}$
and $\basis{q^2}$ are compatible.

Observe that the cokernel complex~$\Coker D\paraZ\mapswith{[\Delta]}\Coker\alpha$ consists of only
two non-zero terms with the isomorphism $[\Delta]$ induced via
Theorem~\ref{commutative-diagram-theorem} from the connecting homomorphism~$\Delta$.
Hence, it makes sense to set~$\basis{q^1}=[\Delta]^{-1}(\basis{q^2})$.
This choice of $\basis{q^\bullet}$ ensures that
\begin{equation}\label{cokernel-1}
	\Tor\bigl(\Coker D\paraZ \to \Coker\alpha, \basis{q^\bullet},\varnothing\bigr) = \pm 1.
\end{equation}
Finally, we choose a collection~$\widetilde{\basis{q^1}}$ of lifts of the vectors of $\basis{q^1}$ to
$H^1(M;\KKsub{M_0})$ and set
\begin{equation}\label{h1_M0}
\basis{h}^1_0 := \widetilde{\basis{q^1}}\cup
\bigl\{D\paraZ\bigl({\textstyle\frac{\partial}{\partial z_j}}\bigr)\bigr\}_{j=1}^N
	\subset H^1(M;\KKsub{M_0}).
\end{equation}
This choice guarantees that $\basis{h}^1_0$,
$\bigl\{\dby{z_j}\bigr\}_{j=1}^N$
and $\basis{q^1}$ also satisfy the compatibility condition.

\subsection{Reduction of the 1-loop Conjecture}
Using the graded-compatible bases constructed in the preceding section,
we can use the decompositions~\eqref{first-decomposition} and~\eqref{second-decomposition}
to compute the adjoint hyperbolic torsion $\ATor(M,\theta)\in\CC^*/\{\pm1\}$.
Thanks to~\eqref{cokernel-1} and~\eqref{relative-1}, we can write
\begin{equation}\label{final-decomposition}
	\ATor(M,\theta)
	= \pm
 	\Tor\bigl(\mathcal{G}^\bullet, \Choibasis,\varnothing\bigr)
	\Tor\bigl( C^\bullet(X^{(1)};\KK), \basis{c}^{(0,1)}_{\text{geom}}, \basis{h}^1_0\bigr)
	=: \pm \Tor_1 \Tor_2.
\end{equation}
We believe that it is possible to express the two factors $\Tor_1$ and $\Tor_2$ in closed form
in terms of the so-called \emph{enhanced Neumann--Zagier datum}~$(\hat{G}, \hat{G}', \hat{G}'', z, f,f',f'')$.
The lemma given below substantiates this belief in the case of $\Tor_1$.
\begin{lemma}\label{calculated-Choi-torsion}
\(\displaystyle
	\Tor_1
	= \pm{\textstyle\frac{1}{2}}\det\bigl(
	\hat{G}\diag(\zeta) + \hat{G}'\diag(\zeta') + \hat{G}''\diag(\zeta'')\bigr).
\)
\end{lemma}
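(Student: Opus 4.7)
The plan is to compute $\Tor_1$ directly from the definition~\eqref{tordef} applied to the acyclic four-term complex~$\mathcal{G}^\bullet$, by choosing convenient auxiliary splittings, and then to identify the resulting expression with the claimed determinant via Cramer's rule.

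First I would compute the matrix representations of $Dy$, $Dg$ and $Dp$ in the basis~$\Choibasis$. Since $g_i(z) = 1$ at $z=y(u)\in\glvar$, logarithmic differentiation yields
\[
	\frac{\partial g_i}{\partial z_j}\bigl(y(u)\bigr) = G_{ij}\zeta_j + G'_{ij}\zeta'_j + G''_{ij}\zeta''_j,
\]
so $Dg$ is represented by $B := G\diag(\zeta) + G'\diag(\zeta') + G''\diag(\zeta'')$. The matrix of $Dy$ is the column vector $a = (a_j)_j$ with $a_j=\partial z_j/\partial u$, while $k=1$ forces every edge to have both ends at the unique ideal vertex, so the matrix of $Dp$ is the row vector $c^\top = (2,\dotsc,2)$.

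Next I would apply~\eqref{tordef} with $\basis{h}^\bullet=\pusty$, using $\basis{b^1}=\{a\}$ and $\basis{b^3}=\{1\}$, which renders the $i=0,3$ factors equal to $\pm 1$. I pick an index~$j_0$ with $a_{j_0}\ne 0$ (possible by injectivity of $Dy$) and set $\basis{b^2}=\{Dg(\partial/\partial z_j):j\ne j_0\}$, with the natural lift $s^1(\basis{b^2})=\{\partial/\partial z_j:j\ne j_0\}$ and $s^2(\{1\})=\tfrac12\partial/\partial x_N$. Expanding the two remaining volume quotients by cofactors along the distinguished column and row gives
\[
	\Tor_1 = \pm\frac{\det B_{\widehat{N},\widehat{j_0}}}{2\,a_{j_0}},
\]
where $B_{\widehat{N},\widehat{j_0}}$ denotes the $(N-1)\times(N-1)$ minor of~$B$ obtained by deleting row~$N$ and column~$j_0$; the factor $\tfrac12$ arises from the chosen lift $s^2(\{1\}) = \tfrac12\partial/\partial x_N$.

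Finally I would match this with the determinant in the statement using Cramer's rule. The relation $c^\top B=0$ forces the rows of~$B$ to sum to zero; combined with $\rank B=N-1$, this implies that any $N-1$ rows of~$B$ are linearly independent, and exactness at $T_{y(u)}\CC_{\Imag>0}^N$ then identifies~$\CC\cdot a$ with the common kernel of the first $N-1$ rows. Cramer's rule thus gives $a_j = \lambda\,(-1)^j\det B_{\widehat{N},\widehat{j}}$ for a single nonzero $\lambda\in\CC$. Since $y_\theta$ is a local inverse of $u_\theta$ on $\glvar$, we have $\sum_j u'_j a_j = 1$, where $u'_j = C_j\zeta_j+C'_j\zeta'_j+C''_j\zeta''_j$ is precisely the $j$th entry of the bottom row of $\hat G\diag(\zeta)+\hat G'\diag(\zeta')+\hat G''\diag(\zeta'')$. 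Expanding this determinant along its bottom row and substituting the Cramer expression for $a_j$ yields $(-1)^N/\lambda$, while the same Cramer expression gives $\det B_{\widehat{N},\widehat{j_0}}/a_{j_0} = \pm 1/\lambda$; comparison finishes the proof. The main difficulty is the sign bookkeeping in the various cofactor expansions, but since torsion is only defined up to sign, the $(-1)^N$ and $(-1)^{j_0}$ factors are absorbed into the final $\pm$.
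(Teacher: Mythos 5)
Your proof is correct, but it takes a genuinely different route from the paper's. Both proofs start the same way: they reduce to computing the torsion of the acyclic four-term complex~$\mathcal{G}^\bullet$ with the basis~$\Choibasis$, they note that the matrix of $Dg$ is $B:=G\diag(\zeta)+G'\diag(\zeta')+G''\diag(\zeta'')$ via logarithmic differentiation (this is Lemma~\ref{Jacobian-g} in the paper), and they use $Dp=[2\;2\;\cdots\;2]$ to introduce the factor of $\tfrac12$ via the lift $s^2(1)=\tfrac12\dby{x_N}$. Where you diverge is in the choice of $s^1(\basis{b^2})$: the paper picks the lift $\{\dby{x_i}-\dby{x_N}\}_{i<N}$ and produces vectors $w_1,\dotsc,w_{N-1}$ with $Dg(w_i)=\dby{x_i}-\dby{x_N}$, so that after adjoining $w_N=Dy(\dby{u})$ the product $\fatG A_1$ is unit lower-triangular with $\fatG=\hat G\diag(\zeta)+\hat G'\diag(\zeta')+\hat G''\diag(\zeta'')$, giving $\det A_1 = 1/\det\fatG$ immediately. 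You instead lift $\{\dby{z_j}:j\neq j_0\}$ (for a carefully chosen $j_0$ with $a_{j_0}\neq 0$), arriving at $\Tor_1=\pm\det B_{\widehat N,\widehat{j_0}}/(2a_{j_0})$, and then close the gap with Cramer's rule for the kernel of the first $N-1$ rows of~$B$ (valid because $c^\top B=0$ forces row-sum zero and hence independence of any $N-1$ rows), combined with the chain-rule identity $\sum_j u'_j a_j=1$. Expanding $\det\fatG$ along its bottom row converts this to $(-1)^N/\lambda$, and the two Cramer evaluations cancel the $\lambda$ up to sign. The paper's trick is slicker because it makes the determinant appear directly from a triangularization; your route is more pedestrian but arguably more transparent about where $\hat G$ (as opposed to $G$) enters, and about why the nonvanishing of $\det\fatG$ holds (it is $(-1)^N/\lambda$ with $\lambda\neq 0$). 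Both rely in the same essential way on $k=1$ and on the sign ambiguity of torsion to absorb the $(-1)^{j_0}$ and $(-1)^N$.
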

We postpone the proof of this lemma until Section~\ref{torsion-Choi} below.
\begin{corollary}
When $\theta$ and $\widetilde{\theta}$ are two homotopically non-trivial
simple closed curves in the boundary torus $\partial\overline{M}$,
then
\[
	\frac{\ATor(M,\widetilde{\theta})}
	     {\ATor(M,\theta)}
	= \pm \frac{\det \bigl(
	  \widetilde{\hat{G}}\diag(\zeta)
	+ \widetilde{\hat{G'}}\diag(\zeta')
	+ \widetilde{\hat{G''}}\diag(\zeta'')\bigr)}
	     {\det \bigl(
	  \hat{G}\diag(\zeta)
	+ \hat{G}'\diag(\zeta')
	+ \hat{G}''\diag(\zeta'')\bigr)},
\]
where the matrices $(\hat{G},\hat{G'},\hat{G''})$
and $(\widetilde{\hat{G}},\widetilde{\hat{G'}},\widetilde{\hat{G''}})$
contain in their last rows the coefficients of the completeness equation along $\theta$
and $\widetilde{\theta}$, respectively.
\end{corollary}
\begin{proof}
This follows at once from \eqref{final-decomposition}, Lemma~\ref{calculated-Choi-torsion},
and from the fact that $\Tor_2$ does not depend on the choice of the boundary curve.
\end{proof}

Comparing the expression for $\Tor_1$ given in Lemma~\ref{calculated-Choi-torsion} with the \nword{1}{loop}
formula~\eqref{1-loop-formula}, we obtain the following reduction of Conjecture~\ref{1-loop}.

\begin{conjecture}[Reduced \nword1loop Conjecture]\label{reduced-conjecture}
Whenever $M$ is a connected, orientable hyperbolic \nword{3}{manifold} of finite volume
with $k>0$ toroidal ends equipped with a geometric, positively oriented ideal
triangulation~$(\triang, z)$ and a strong combinatorial flattening~$(f,f',f'')$
on $\triang$, we have
\begin{equation}\label{reduced-formula}
	\Tor_2 = \pm\zeta^{-f} {\zeta'}^{-f'} {\zeta''}^{-f''}.
\end{equation}
\end{conjecture}

Note that the decomposition~\eqref{final-decomposition} holds \emph{a priori}
for incomplete hyperbolic structures
obtained as small deformations of the unique complete structure.
However, it is shown in \cite{tudor-stavros} that
the shape parameters are rational functions on the geometric component~$X_0$ of the
\nword{\PSL}{character} variety~$X(\pi_1(M),\PSL)$.
Hence, Lemma~\ref{calculated-Choi-torsion} implies that $\Tor_1$
defines a rational function (up to sign) on a regular neighborhood of the holonomy
representation of the complete structure.
By a result of Porti~\cite[Proposition~4.14]{porti1997}, the adjoint torsion~$\ATor(M,\theta)$
is also a rational function on~$X_0$.
Hence, the equality \eqref{final-decomposition} guarantees that $\Tor_2$ is
rational as well.
In particular, the decomposition~\eqref{final-decomposition} extends to the holonomy
representation of the complete hyperbolic structure.

\begin{theorem}\label{reduction-theorem}
If $M$, $\triang$ and $(f,f',f'')$ satisfy the assumptions of Conjecture~$\ref{1-loop}$, then
\begin{enumerate}[(i)]\itemsep0pt
	\item\label{conj-reduction}
	Conjecture~$\ref{reduced-conjecture}$ implies the \nword1loop Conjecture~$\ref{1-loop}$ for
	all curves $\theta$;
	\item\label{nonvanishing}
	The conjectural expression~$\eqref{1-loop-formula}$ does not vanish.
\end{enumerate}
\end{theorem}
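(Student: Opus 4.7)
The plan is to deduce both parts from the factorisation $\ATor(M,\theta) = \Tor_1\cdot\Tor_2$ of \eqref{final-decomposition} combined with the explicit formula of Lemma~\ref{calculated-Choi-torsion}. In this setup, $\Tor_1$ is already computed in closed form, so only the behaviour of $\Tor_2$ (for part (i)) and a nonvanishing check (for part (ii)) remain.

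For part~\eqref{conj-reduction}, I would substitute Lemma~\ref{calculated-Choi-torsion} for $\Tor_1$ and the hypothesis of Conjecture~\ref{reduced-conjecture} for $\Tor_2$ into the product $\Tor_1\cdot\Tor_2$. The factor $\tfrac12$ appearing in $\Tor_1$ combines with the assumption $k=1$ so that $2^k=2$ in the denominator of~\eqref{1-loop-formula}, and the resulting product recovers the right-hand side of~\eqref{1-loop-formula} exactly. The only subtle point is that \eqref{final-decomposition} was established only for log-parameters~$u$ with $0<|u|<\pi$, whereas the conclusion of the \nword1loop Conjecture is required at the complete structure $u=0$. This gap is bridged by the rationality argument sketched just before the theorem: Lemma~\ref{calculated-Choi-torsion} presents $\Tor_1$ as a rational function of the shape parameters, and by Porti~\cite[Proposition~4.14]{porti1997} the adjoint torsion~$\ATor(M,\theta)$ is a rational function on the geometric component $X_0$ of $X(\pi_1(M),\PSL)$. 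Consequently $\Tor_2=\ATor/\Tor_1$ is rational on a neighbourhood of the discrete faithful representation, so an equality that holds on the open dense set $\{0<|u|<\pi\}$ automatically holds at $u=0$ by analytic continuation.

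For part~\eqref{nonvanishing}, I would observe that whenever $z \in \CC_{\Imag>0}^N$, each of the functions $\zeta_j$, $\zeta'_j$, $\zeta''_j$ defined in~\eqref{def-zetas} is a well-defined nonzero complex number, since $\Imag z_j > 0$ excludes the singular values $z_j\in\{0,1\}$. Hence the denominator of~\eqref{1-loop-formula} is a nonzero complex number. By Lemma~\ref{calculated-Choi-torsion}, the numerator of~\eqref{1-loop-formula} equals $\pm 2\Tor_1$ up to sign, and $\Tor_1\in\CC^*/\{\pm1\}$ by the very definition of combinatorial torsion as a ratio of nonvanishing top exterior products (cf.~\eqref{tordef}). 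Hence the numerator is nonzero and the full expression does not vanish.

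The main obstacle, such as it is, lies in justifying the analytic extension to the complete structure in part~(i); once Lemma~\ref{calculated-Choi-torsion} and Porti's rationality result are in hand, no further geometric or cohomological input is needed. Part~(ii) reduces to the tautological observation that torsion is defined as a nonzero scalar.
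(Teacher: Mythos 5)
Your proof is correct and follows the same approach as the paper: Part (i) is the substitution of Lemma~\ref{calculated-Choi-torsion} and the Reduced Conjecture into the factorisation~\eqref{final-decomposition}, with the rationality argument (Porti for $\ATor$ and Lemma~\ref{calculated-Choi-torsion} for $\Tor_1$) extending the conclusion to the complete structure, and Part (ii) is the observation that both the monomial denominator and the torsion numerator of~\eqref{1-loop-formula} are individually nonzero. The paper also emphasises that the Reduced Conjecture makes no reference to $\theta$, which is why the conclusion holds for \emph{all} peripheral curves; this is implicit in your substitution argument but worth stating outright.
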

\begin{proof}
Part \eqref{conj-reduction} follows from the decomposition of torsion discussed above
and from the fact that the Reduced Conjecture~\ref{reduced-conjecture} does not involve the choice
of a peripheral curve.

In order to prove part~\eqref{nonvanishing}, observe that eq.~\eqref{final-decomposition} defines
the term~$\pm\Tor_1$ as the combinatorial torsion of an acyclic complex,
so by \eqref{tordef}, $\Tor_1$ is always a non-zero scalar (defined up to sign).
By Lemma~\ref{calculated-Choi-torsion}, $\Tor_1$ coincides with
the numerator of \eqref{1-loop-formula}.
\end{proof}

\subsection{Torsion of the infinitesimal gluing equations}\label{torsion-Choi}
This section is devoted to the proof of Lemma~\ref{calculated-Choi-torsion}.
Our task is to compute the torsion of the acyclic `tangential gluing complex'
of eq.~\eqref{Choi-choice},
\begin{equation}\label{graded-Choi}
	\begin{tikzcd}[row sep=-1.5ex]
	0 \arrow{r}
	& T_u U \arrow{r}{Dy}
	& T_{y(u)} \CC_{\Imag>0}^N \arrow{r}{Dg}
	& T_1 (\CC^*)^N \arrow{r}{Dp}
	&\CC\ \arrow{r}
	& 0,\\
	\scriptstyle
	& 0 & 1 & 2 & 3 &
	\end{tikzcd}
\end{equation}
with the grading indicated under each non-trivial term, with respect to the bases
\[	
	\basis{c^0}=\bigl\{{\textstyle\frac{\partial}{\partial u}}\bigr\},\quad
	\basis{c^1}=\bigl\{{\textstyle\frac{\partial}{\partial z_j}}\bigr\}_{j=1}^N,\quad
	\basis{c^2}=\bigl\{{\textstyle\frac{\partial}{\partial x_i}}\bigr\}_{i=1}^N,\quad
	\basis{c^3}=\{1\}.
\]
According to \eqref{tordef}, the torsion of \eqref{graded-Choi} can be calculated
as $\pm\frac{\det A_0 \det A_2}{\det A_1 \det A_3}$, where $A_r$ is the
change-of-basis matrix relating the basis~$\basis{b^r}\cup s^r(\basis{b^{r+1}})$
to the basis~$\basis{c^r}$, for every $0\leq r \leq 3$.
We choose the following bases $\basis{b^r}$:
\[
	\basis{b^0} = \varnothing,\quad
	\basis{b^1} = \left\{Dy\bigl(\tfrac{\partial}{\partial u}\bigr)\right\},\quad
	\basis{b^2} = \bigl\{\tfrac{\partial}{\partial x_i}-\tfrac{\partial}{\partial x_N}\bigr\}_{i=1,\dotsc,N-1},\quad
	\basis{b^3} = \{1\}.
\]
We begin the computation with the third term~$\CC$,
where we have $\basis{b^3}=\basis{c^3}=\{1\}$,
so that the change-of-basis matrix equals $A_3=[1]$.

Since $k=1$, all edges of $\triang$ have both their ends incident to the only toroidal end of $M$,
so the Jacobian matrix of the map $p$ of \eqref{def-p} is $[2\ 2 \dotsb 2]$.
Hence, we may choose $\frac{1}{2}\dby{x_N}\in T_1(\CC^*)^N$ as a pre-image of
$1\in\CC$ under $Dp$.
This vector can be completed to a basis by adjoining
the elements of $\basis{b^2}$.
Expressing this basis in terms of the original basis $\basis{c^2}$,
we obtain the change-of-basis matrix
\begin{equation}\label{A2-matrix}
	A_2=\begin{bmatrix}
		1 & 0 & \dots & 0 & 0\\
		0 & 1 &  & 0 & 0\\
		\vdots &&\ddots &&\vdots\\
		0 & 0 &  & 1 & 0\\
		-1 & -1 & \dots & -1 & \frac{1}{2}\\
	\end{bmatrix}.
\end{equation}

Note that $\dby{x_i}-\dby{x_N}\in\Kernel Dp=\Imag Dg$ for every $i$.
Hence, there exist vectors~$w_i \in T_{y(u)} \CC_{\Imag>0}^N$ such that
$Dg(w_i)=\dby{x_i}-\dby{x_N}$ for all $i$.
By exactness, the set~$\{w_1,\dotsc,w_{N-1}\}\subset T_{y(u)}\CC_{\Imag>0}^N$
can be completed to a basis by adjoining the vector~$w_N := Dy\bigl(\dby{u}\bigr)$.
Hence, the change-of-basis matrix $A_1$ has the form
$A_1 = [ w_1| w_2 | \dotsb | w_N]$,
where each column marked $w_i$ contains the coefficients of $w_i$ in
the basis~$\basis{c^1}=\bigl\{\dby{z_j}\bigr\}_{j=1}^N$.
In order to compute the determinant of $A_1$, we need the following lemma.

\begin{lemma}\label{Jacobian-g}
The matrix $G\diag(\zeta)+G'\diag(\zeta')+G''\diag(\zeta'')$
is the Jacobian matrix of the map $g$ of \eqref{def-g} at any point $z\in\glvar$.
Similarly, the matrix $C\diag(\zeta)+C'\diag(\zeta')+C''\diag(\zeta'')$
is the Jacobian of the log-parameter map $u: \CC_{\Imag>0}^N\to\CC$.
\end{lemma}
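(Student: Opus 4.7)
The plan is to compute the partial derivatives directly by working with logarithmic derivatives, noting that for each index $j$ the auxiliary parameters $z'_j = 1/(1-z_j)$ and $z''_j = 1 - 1/z_j$ depend only on the single variable $z_j$, so cross terms vanish.

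First, denote the $i$th component of $g$ by $g_i(z) = \prod_j z_j^{G_{ij}} {z'_j}^{G'_{ij}} {z''_j}^{G''_{ij}}$. Taking $\log$ locally on $\CC_{\Imag>0}^N$ gives
\[
\log g_i(z) \;=\; \sum_{j=1}^N \bigl(G_{ij}\log z_j + G'_{ij}\log z'_j + G''_{ij}\log z''_j\bigr).
\]
Differentiating with respect to $z_k$ and applying the chain rule, together with the fact that $z'_j, z''_j$ depend only on $z_j$, yields
\[
\frac{1}{g_i(z)}\frac{\partial g_i}{\partial z_k}
\;=\; G_{ik}\,\frac{d\log z_k}{dz_k} + G'_{ik}\,\frac{d\log z'_k}{dz_k} + G''_{ik}\,\frac{d\log z''_k}{dz_k}.
\]
Next I would verify by an elementary calculation the identities
\[
\frac{d\log z_k}{dz_k} = \zeta_k,\qquad
\frac{d\log z'_k}{dz_k} = \zeta'_k,\qquad
\frac{d\log z''_k}{dz_k} = \zeta''_k,
\]
which match the defining formulas \eqref{def-zetas} precisely once one differentiates $\log(1-z_k)^{-1}$ and $\log(1-z_k^{-1})$ and simplifies.

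To finish the first assertion, I would use the hypothesis $z \in \glvar$, which means $g_i(z) = 1$ for all $i$; the factor $1/g_i(z)$ on the left disappears, and the $(i,k)$ entry of the Jacobian becomes
\[
\frac{\partial g_i}{\partial z_k}\Big|_{z\in\glvar}
= G_{ik}\zeta_k + G'_{ik}\zeta'_k + G''_{ik}\zeta''_k,
\]
which is exactly the $(i,k)$ entry of $G\diag(\zeta) + G'\diag(\zeta') + G''\diag(\zeta'')$. For the log-parameter map, no restriction to $\glvar$ is needed: by definition
\[
u_l(z) = \sum_{j=1}^N \bigl(C_{lj}\log z_j + C'_{lj}\log z'_j + C''_{lj}\log z''_j\bigr),
\]
so differentiation in $z_k$ immediately produces $\partial u_l/\partial z_k = C_{lk}\zeta_k + C'_{lk}\zeta'_k + C''_{lk}\zeta''_k$, yielding the stated form of the Jacobian.

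There is essentially no obstacle here; the lemma is a bookkeeping statement unpacking Thurston's gluing equations, and the only subtle point is remembering that the factor $1/g_i(z)$ in the logarithmic derivative collapses precisely because one restricts to the gluing variety.
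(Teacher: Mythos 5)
Your proof is correct and follows essentially the same computation as the paper: both amount to taking the logarithmic derivative of $g_i$ (you do it by taking $\log$ first; the paper applies the product rule directly and then factors out $g_i(z)$) and then using $g_i(z)=1$ on $\glvar$ to drop that factor. Your additional observation that the $u$ case needs no restriction to $\glvar$ is a nice explicit touch that the paper leaves implicit in ``the proof for $u$ is analogous.''
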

\begin{proof}
We calculate the $(i,j)$-th entry of the Jacobian of $g$; the proof for $u$ is analogous.
\begin{align*}
	\frac{\partial g_i(z)}{\partial z_j}
	&=
	\Biggl(
		G_{ij}{z_j}^{G_{ij}-1}{z'_j}^{G'_{ij}}{z''_j}^{G''_{ij}}
		+ \frac{G'_{ij}{z_j}^{G_{ij}}{z'_j}^{G'_{ij}-1}{z''_j}^{G''_{ij}}}{(z_j-1)^2}
		+ \frac{G''_{ij}{z_j}^{G_{ij}}{z'_j}^{G'_{ij}}{z''_j}^{G''_{ij}-1}}{z_j^2}
	\Biggr)\\
		&\times\prod_{m\neq j} {z_m}^{G_{im}} {z'_m}^{G'_{im}} {z''_m}^{G''_{im}}\\
	&=
	\Biggl(
			\frac{G_{ij}}{z_j} + \frac{G'_{ij}}{1-z_j} + \frac{G''_{ij}}{z_j(z_j - 1)}
		\Biggr)g_i(z)
	=
		G_{ij}\zeta_j + G'_{ij}\zeta'_j + G''_{ij}\zeta''_j.\qedhere
\end{align*}
\end{proof}
\def\fatG{\mathbb{G}}
Consider the matrices $\hat{G}^\square$ of \eqref{def-hats} and
define $\fatG:=\hat{G}\diag(\zeta) + \hat{G}'\diag(\zeta') + \hat{G}''\diag(\zeta'')$.
By Lemma~\ref{Jacobian-g}, the product $\fatG A_1$ has the block form
\begin{equation}\label{multiplied-by-S}
	\fatG A_1 =
	\left[\;
	\begin{tikzpicture}[baseline=({baza})]
		\node[minimum width=16ex, minimum height=16ex] (Id) at (0,0)
			{$\displaystyle\Id_{(N-1)\times(N-1)}$};
		\draw ({Id.south west}) rectangle ({Id.north east});
		\node[minimum width=16ex,anchor=north,yshift=-2pt] (star) at ({Id.south})
			{$\displaystyle *\vphantom{X}$};
		\draw ({star.south west}) rectangle ({star.north east});
		\node[minimum height=16ex,anchor=west] (jacek) at ({Id.east}) {$\displaystyle\,\fatG w_N$};
		\draw ([xshift=2pt] {star.south east}) rectangle ({jacek.north east});
		\node (baza) at ([yshift=-2.3ex]{Id}) {};
	\end{tikzpicture}\;\right].
\end{equation}

Since the top $N-1$ rows of $\fatG$ agree with those of the Jacobian of $g$ and $Dg\circ Dy=0$,
we find $\fatG w_N = \bigl[0,0,\dotsc,0,1\bigr]^\top$.
This implies that $\fatG A_1$ is a lower-triangular matrix with ones on the main diagonal,
whence $\det A_1 = 1/\det\fatG$.

The remaining change-of-basis matrix at $T_u U$ reduces to $A_0=\left[1\right]$, because
the preimage of $Dy\bigl(\dby{u}\bigr)$ under $Dy$ is tautologically $\dby{u}$.
Hence, the torsion of the complex \eqref{Choi-choice} equals
\begin{equation*}
	\pm\frac{\det A_0 \det A_2}{\det A_1 \det A_3}
	= \pm\frac{1}{2}\det \fatG
	= \pm\frac{1}{2}\det \bigl(
		\hat{G}\diag(\zeta) + \hat{G}'\diag(\zeta') + \hat{G}''\diag(\zeta'')\bigr).
\end{equation*}
This concludes the proof of Lemma~\ref{calculated-Choi-torsion}.

\appendix
\section{The sister manifold of the figure-eight knot complement}
The goal of this appendix is to verify the reduced \nword{1}{loop}
conjecture (Conjecture~\ref{reduced-conjecture})
for the sister manifold of the figure-eight knot complement, denoted by $M$ from now on.
The minimal triangulation~$\triang$ of $M$, with $N=2$ tetrahedra,
can be generated and explored with the help of the computer program Regina~\cite{regina},
which can reconstruct $\triang$ from the isomorphism signature~\texttt{"cPcbbbdxm"}.
In this way, we read off the gluing pattern of $\triang$,
presented in Figure~\ref{fig:triangulation-m003}.
Note that the SnapPea census triangulation~\texttt{m003},
available e.g. from within SnapPy~\cite{snappy},
comes with a slightly different labeling of vertices and peripheral curves,
although it is combinatorially isomorphic to our triangulation~$\triang$.

The complete hyperbolic structure of finite volume on $M$ is recovered when
the shape parameters have values $z_1=z_2 = e^{\pi i/3}$.
However, in this calculation we shall treat the shape parameters as indeterminates,
which will allow us to prove \eqref{reduced-formula} as an equality of rational
functions on the gluing variety~$\glvar$.

Using Figure~\ref{fig:triangulation-m003} or the SnapPea
functionality built into Regina, we can find the gluing matrices of $\triang$;
in the notations of Section~\ref{gluing-section}, they are
\begin{alignat*}{3}
	G&=\begin{bmatrix}
	2 & 1 \\
	0 & 1
	\end{bmatrix},\quad&
	G'&=\begin{bmatrix}
	1 & 0 \\
	1 & 2
	\end{bmatrix},\quad&
	G''&=\begin{bmatrix}
	0 & 2 \\
	2 & 0
	\end{bmatrix},\\
	C&=\begin{bmatrix}
	2 & 0 \\
	-1 & 1
	\end{bmatrix},\quad&
	C'&=\begin{bmatrix}
	0 & -2 \\
	-1 & 0
	\end{bmatrix},\quad&
	C''&=\begin{bmatrix}
	0 & 0 \\
	0 & 1
	\end{bmatrix}.
\end{alignat*}
A strong combinatorial flattening is thus given by $f=(0,1)$, $f'=(1,0)$, $f''=(0,0)$.

Following the approach described in Section~\ref{dual-complex}, we are going to use
the cell complex $X$ dual to $\triang$ as a topological model for $M$.
In our case, $X$ has two \nword{0}{cells} (dual to $\Delta_1$ and $\Delta_2$), four
\nword{1}{cells} (dual to the faces $\mathcal{A}$, $\mathcal{B}$, $\mathcal{C}$,
$\mathcal{D}$) and two \nword{2}{cells}, dual to the edges $e_1$ and $e_2$.
We orient all \nword{1}{cells} to point from $\Delta_1$ to $\Delta_2$
and equip the \nword{2}{cells} with orientations dual to the orientations of the edges $e_1$
and $e_2$ shown as arrows in Figure~\ref{fig:triangulation-m003}, using the right-hand rule.

Recall that the left-hand side~$\Tor_2$ of the conjectural equality \eqref{reduced-formula}
is the Reidemeister torsion of the handlebody~$M_0$ of Definition~\ref{def-M0}.
Since $M_0$ deformation-retracts onto the \nword{1}{skeleton}~$X^{(1)}$,
$\Tor_2$ is the torsion~$\Tor\bigl(C^\bullet(X^{(1)};\KK), \basis{c}^{(0,1)}_{\text{geom}},
\basis{h}^1_0\bigr)$.
Note that the complex $C^\bullet(X^{(1)};\KK)$ has non-vanishing cohomology only in degree~$1$,
so by resolving it on the right, we can also express $\Tor_2$
as the Reidemeister torsion of the acyclic complex
\begin{equation}\label{resolved-complex}
	0\to C^0(X;\KK) \mapswith{\delta^0} C^1(X;\KK)\to H^1(M_0;\KKsub{M_0})\to 0
\end{equation}
with respect to the geometric bases of the cochain spaces, as constructed in
Section~\ref{good-bases}, and the basis
$\basis{h}^1_0 = \bigl\{D\paraZ\bigl({\textstyle\frac{\partial}{\partial z_j}}\bigr)\bigr\}_{j=1}^2
\cup\widetilde{\basis{q^1}}$
for the rightmost term.

\begin{figure}
	\centering
	\begin{tikzpicture}
		\node[anchor=south west,inner sep=0] (image) at (0,0)
		    {\includegraphics[width=0.8\columnwidth]{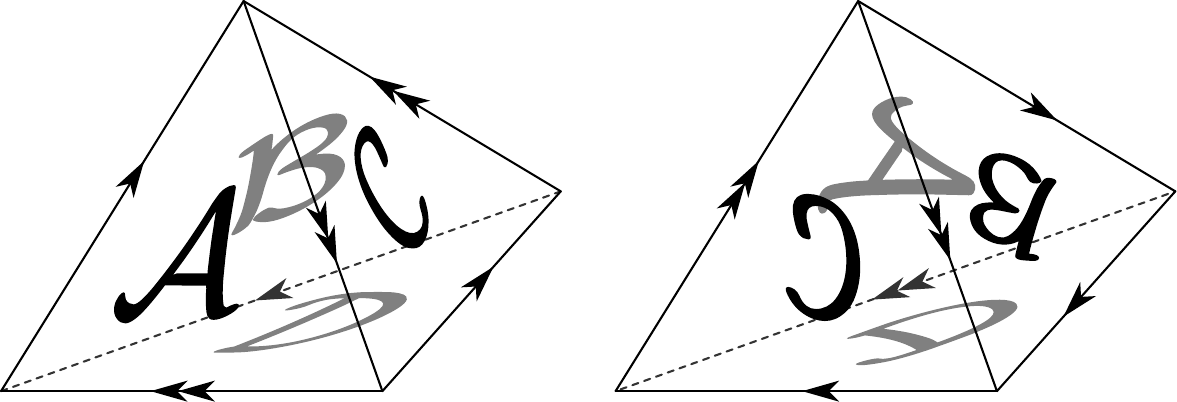}};
		\begin{scope}[x={(image.south east)},y={(image.north west)},
				every node/.style={inner sep=0, outer sep=0}]
			\node[anchor=south] at (0.1, 0.95)   {$\Delta_1$};
			\node[anchor=south] at (0.208, 1.02) {$0$};
			\node[anchor=east] at (0, 0.05)      {$1$};
			\node[anchor=west] at (0.35, 0.05)   {$2$};
			\node[anchor=west] at (0.48, 0.52)   {$3$};
			\node[anchor=south east] at (0.09, 0.5) {$z_1$};
			\node[anchor=north west] at (0.415, 0.27) {$z_1$};
			\node[anchor=east] at (0.265, 0.37) {$z'_1$};
			\node[anchor=north] at (0.2, 0) {$z''_1$};
			\node[anchor=south west] at (0.35, 0.76) {$z''_1$};
			\node[anchor=south] at (0.61, 0.95)  {$\Delta_2$};
			\node[anchor=south] at (0.73, 1.02)  {$0$};
			\node[anchor=east] at (0.522, 0.05)  {$1$};
			\node[anchor=west] at (0.872, 0.05)  {$2$};
			\node[anchor=west] at (1.002, 0.52)  {$3$};
			\node[anchor=south east] at (0.612, 0.5) {$z_2$};
			\node[anchor=north west] at (0.937, 0.27) {$z_2$};
			\node[anchor=east] at (0.787, 0.375) {$z'_2$};
			\node[anchor=north] at (0.742, 0) {$z''_2$};
			\node[anchor=south west] at (0.872, 0.76) {$z''_2$};
		\end{scope}
	\end{tikzpicture}
	\caption{
	A schematic view of the triangulation \texttt{"cPcbbbdxm"}.
	Each of the four faces in the triangulation
	is the result of identifying a pair of faces of the tetrahedra $\Delta_1$, $\Delta_2$ according
	to the gluing pattern shown.
	For example, the face $[0,1,2]$ of $\Delta_1$ is glued to the face $[3,0,1]$ of $\Delta_2$
	(with this order of vertices), thus forming the face $\mathcal{A}$ of $\triang$.
	This face pairing identifies the edges of the tetrahedra in two lots of six, giving rise to the
	two edges $e_1$, $e_2$ of $\triang$, marked here with single and double arrow heads, respectively.
	}\label{fig:triangulation-m003}
\end{figure}

In order to make our computation more explicit, we shall now choose geometric bases for the cochain
groups of $X$ using the method of local geometric coordinate charts described in Section~\ref{geom-bases}.
Let $\varphi_j$ be the local geometric coordinate on the interior of the tetrahedron $\Delta_j$ ($j=1,2$)
uniquely determined by the requirement that its continuation to the face $[0,1,2]$
takes this face to the ideal triangle $(\infty,0,1)\subset\HH$, with vertices corresponding
to one another in the order given.
In this way, the charts $\varphi_1$ and $\varphi_2$ establish the identification $C^0(X;\KK)\cong(\slC)^2$.
Moreover, $\varphi_1$ can be continued from within $\Delta_1$ to interior points of all four faces of $\Delta_1$,
which allows us to identify $C^1(X;\KK)$ with $(\slC)^4$.
For completeness, this construction can be also carried out for the \nword{2}{cells}
dual to the edges: the \nword{2}{cell} dual to $e_j$ carries a local coordinate chart obtained
as the continuation of $\varphi_j$ to the edge $[1,0]$ of $\Delta_j$ ($j=1,2$).
In particular, these charts take both edges to the geodesic $(0,\infty)\subset\HH$, as postulated
in Section~\ref{dual-complex}.
We may now identify $\slC$ with $\CC^3$ by using the standard basis
\begin{equation}\label{ehf}
	\slE = \begin{bmatrix}0 & 1 \\ 0 & 0\end{bmatrix},\quad
	\slH = \frac{1}{2}\begin{bmatrix}1 & 0 \\ 0 & -1\end{bmatrix},\quad
	\slF = \begin{bmatrix}0 & 0 \\ 1 & 0\end{bmatrix}.
\end{equation}

By applying the above basis choices to \eqref{resolved-complex},
we see that $\Tor_2 = \pm(\det A)^{-1}$ for a matrix $A$ with the block
form~$A = \left[\delta^0 \mid L \right]$,
where $\delta^0$ is the matrix of the \nword{0}th coboundary map, while the
columns of $L$ are expressions,
in the basis of $C^1(X;\KK)$, of cochains representing
the cohomology basis vectors making up $\basis{h}^1_0$.
Our task is now to compute $\delta^0$ and $L$.

Using the method detailed in \cite[\S2.3.2]{rs-phd},
we may write down the monodromy of the hyperbolic structure on $M$ in terms of three fundamental
M\"obius transformations: the \emph{complement}~$z\mapsto1-z$, the \emph{inversion}~$z\mapsto z^{-1}$
and the complex \emph{homothety}~$h_s(z)=sz$.
In the basis \eqref{ehf}, the adjoint images of the above M\"obius transformations are
given by
\begin{align}\label{CRH}
C := \Ad(z\mapsto1-z) &= \begin{bmatrix}
		-1	& -1	&  1	\\
		 0	&  1	& -2	\\
		 0	&  0	& -1	
	\end{bmatrix},
	\quad
R := \Ad(z\mapsto z^{-1}) = \begin{bmatrix}
		0	&  0	& 1	\\
		0	& -1	& 0	\\
		1	&  0	& 0	
	\end{bmatrix},
	\\\notag
H_s := \Ad(z\mapsto sz) &= \begin{bmatrix}
		s	& 0	& 0	\\
		0	& 1	& 0	\\
		0	& 0	& s^{-1}	
	\end{bmatrix}, \quad s\in\CC\setminus\{0\},
\end{align}
cf. \cite[eq.~(2.3.7)]{rs-phd}.
For a face $\mathcal{F}$ of $\triang$, we denote by $\mu_{\mathcal{F}}\in\PSL$
the monodromy of the hyperbolic structure along the oriented \nword{1}{cell} dual to $\mathcal{F}$
with respect to the chosen coordinates at both ends.
With this notation, the matrix of $\delta^0$ has the block form
\begin{equation}\label{delta0}
	\delta^0 = \left[\begin{array}{c|c}
	           -\Id & \Ad\left(\mu^{-1}_{\mathcal{A}}\right) \\\hline
	           -\Id & \Ad\left(\mu^{-1}_{\mathcal{B}}\right) \\\hline
	           -\Id & \Ad\left(\mu^{-1}_{\mathcal{C}}\right) \\\hline
	           -\Id & \Ad\left(\mu^{-1}_{\mathcal{D}}\right)
	           \end{array}\right],
\end{equation}
where $\Id = \Id_{3\times3}$ is the identity matrix.
Using Figure~\ref{fig:triangulation-m003} and tracking down the
M\"obius transformations relating the chosen geometric charts, we find
\begin{alignat}{2}\label{monodromies}
	\Ad\left(\mu^{-1}_{\mathcal{A}}\right) &= RCH^{-1}_{z_2}, \qquad &
	\Ad\left(\mu^{-1}_{\mathcal{B}}\right) &= H_{z_1}CRH_{z'_2}C,\\\notag
	\Ad\left(\mu^{-1}_{\mathcal{C}}\right) &= CH^{-1}_{z'_1}C, \qquad &
	\Ad\left(\mu^{-1}_{\mathcal{D}}\right) &= RCH_{z''_1}CH^{-1}_{z''_2}CR.
\end{alignat}

We now turn to the task of determining the $12\times6$ matrix $L$ forming the right half of $A$.
Note that although the choice of $L$ is not unique, the determinant of $A$ will not be affected
by this indeterminacy.
Denote by $l_1,\dotsc,l_6$ the columns of $L$.
For $j\in\{1,2\}$, we shall take $l_j$ to be
a cocycle in $C^1(X;\KK)\cong(\slC)^4$
representing the element $D\paraZ\bigl({\textstyle\frac{\partial}{\partial z_j}}\bigr)$,
and the remaining columns $l_3,\dotsc,l_6$ will likewise correspond to
the elements of $\widetilde{\basis{q^1}}$.

We begin by computing the column vectors $l_1,l_2$ of $L$.
We shall use Weil's method \cite{weil-remarks}, differentiating the monodromies
along the four \nword{1}{cells} of $X$ given in \eqref{monodromies}
with respect to the shape parameters $z_1$, $z_2$.
In this way,
\begin{align*}
	l_1 = \frac{d}{dt}\Biggr|_{t=z_1}
	\Bigl(
		 \mu^{-1}_{\mathcal{A}}(t,z_2)\mu_{\mathcal{A}}(z_1,z_2),\dotsc,
		 \mu^{-1}_{\mathcal{D}}(t,z_2)\mu_{\mathcal{D}}(z_1,z_2)
	\Bigr)^\top
\end{align*}
and analogously for $l_2$. Using the formula
\(
	\frac{d}{ds}\bigr|_{s=z}\left[h_s h^{-1}_z\right] = \frac{d\log z}{dz}\slH
\)
and keeping \eqref{def-zetas} in mind, we obtain
\begin{equation}\label{l1,l2}
\begin{array}{rrrrr}
l_1  = \bigl( & 0, & \zeta_1\slH, & -\zeta'_1 C(\slH), & \zeta''_1 RC(\slH)\bigr)^\top, \\
l_2  = \bigl( & -\zeta_2RC(\slH), & \zeta'_2H_{z_1}CR(\slH), & 0, & -\zeta''_2RCH_{z''_1}C(\slH)\bigr)^\top.
\end{array}
\end{equation}

We are now left with the task of determining the remaining column vectors~$l_3,\dotsc,l_6$.
Recall from Section~\ref{good-bases} that the basis $\basis{q^1}$ of $\Coker D\paraZ$
was constructed as the preimage of the basis $\basis{q^2}$ of $\Coker\alpha$
under the connecting homomorphism.
Using the isomorphism~\eqref{disc-isomorphism},
$\basis{q^2}$ can be easily described in terms of its representing \nword{2}{cochains}
in $C^2(X;\KK)\cong(\slC)^2$: in our situation, it consists of the four elements
$\{(\slE,0),(\slF,0),(0,\slE),(0,\slF)\}$, taken modulo the subspace of $(\slC)^2$
spanned by $(\slH,0)$ and $(0,\slH)$.
Hence, we could define $l_3,\dotsc,l_6$ as pre-images under the
coboundary operator $\delta^1: C^1(X;\KK)\to C^2(X;\KK)$
of these four basis vectors.
This computation can be further simplified if we recall from Section~\ref{torsion-review}
that the Reidemeister torsion depends only on the exterior product of basis elements.
In other words, it suffices for $l_3,\dotsc, l_6$ to satisfy the equation
\begin{equation}\label{wedgeq}
	\pm\delta^1\bigl(l_3\wedge l_4\wedge l_5\wedge l_6\bigr)
	= [(\slE,0)]\wedge[(\slF,0)]\wedge[(0,\slE)]\wedge[(0,\slF)]
	\in \bigwedge\nolimits^{\mkern-4mu 4} (\slC/\linspan\{\slH\})^2.
\end{equation}
We shall now find an expression for the matrix of the coboundary operator $\delta^1$.
For each face~$\mathcal{F}$ of the triangulation~$\triang$, $\delta^1$ will contain three contributions
corresponding to the three edges of $\mathcal{F}$, which need to be expressed in the
basis of $C^2(X;\KK)\cong(\slC)^2$ coming from local charts on $e_1$ and $e_2$.
This computation is greatly aided by Figure~\ref{fig:edges-m003}, which illustrates the incidences of the faces
of the triangulation~$\triang$ to its edges.
Using the method of \cite[\S2.3.2]{rs-phd}, we find the following block form of $\delta^1$:
\begin{equation}
	\delta_1 = \left[
	\begin{array}{c|c|c|c}
	-\Id
	& \mathstos{H^{-1}_{z''_2}CRH^{-1}_{z_1}}{{} + \Id}
	& H_{z_1^2z_2}RCRH_{z'_1}C
	& \mathstos{-H^{-1}_{z'_1 z''_2} C H^{-1}_{z''_1}CR}{-H_{z_1z_2}RCRH^{-1}_{z''_1}CR}
	\\\hline
	\mathstos{-H_{z_2}CR}{-H_{z_2 z''_1 z'_2}RC}
	& -CH_{z'_1}C
	& \mathstos{H_{z'_1 z'_2 z''_1 z_2}R H_{z'_1}C}{{}+CH_{z'_1}C}
	& H_{z_2}CR
	\end{array}\right].
\end{equation}
\begin{figure}
	\centering
	\includegraphics[width=0.8\columnwidth]{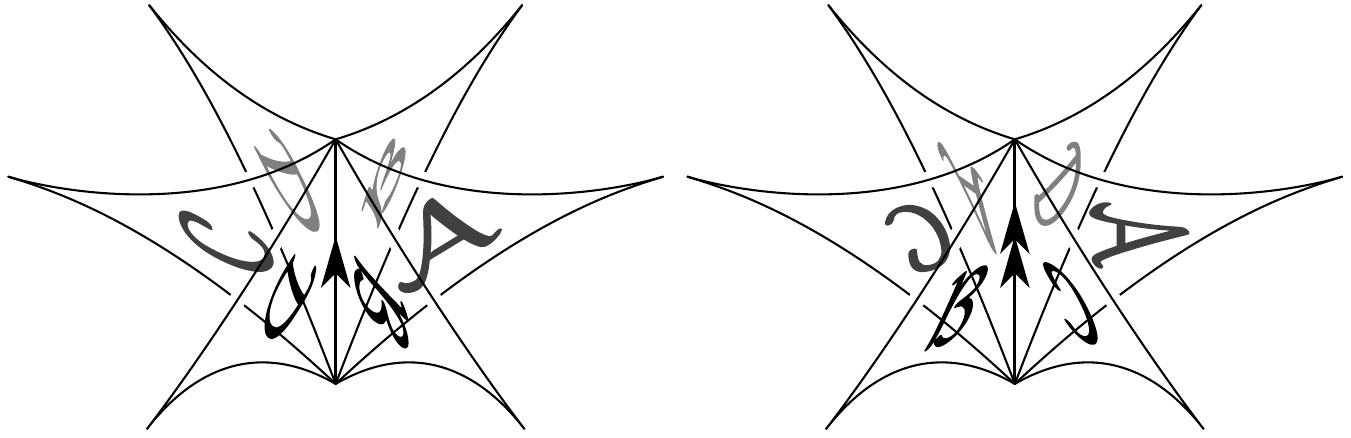}
	\caption{
	Local structure of neighborhoods of the edges $e_1$ and $e_2$ of $\triang$.
	Since every face of the triangulation has three edges, it will account for three
	face-to-edge incidences and thus appear three times in the figure.
	The letters labeling the faces are positioned in the same way as
	in Figure~\ref{fig:triangulation-m003}, and therefore may appear rotated and/or reflected here.
	}\label{fig:edges-m003}
\end{figure}

To make equation~\eqref{wedgeq} more explicit, we may now set
$Q = [l_3\mid l_4\mid l_5\mid l_6]$ and impose the matrix equation
\begin{equation}\label{lifting-equation}
	\delta^1 Q =
	\begin{bmatrix}
	1 & * & * & * \\
	* & * & * & * \\
	0 & 1 & * & * \\
	0 & 0 & 1 & * \\
	* & * & * & * \\
	0 & 0 & 0 & 1
	\end{bmatrix},
\end{equation}
in which the entries marked with ``$*$'' are unimportant.
While solving the above equation is possible by hand, it is perhaps more
convenient to use a computer algebra system.
For the convenience of the reader, the author
provides\footnote{Available at \texttt{http://rs-math.net/attachments/appendix-m003.sage}}
a \texttt{SageMath}~\cite{sagemath} script which verifies the computations presented here.

For any $m,n\in\NN$, denote by $\mathbf{0}_{m\times n}$ the $m\times n$ matrix of zeroes.
A particular solution of eq.~\eqref{lifting-equation} is
$Q=\begin{bmatrix}\mathbf{0}_{3\times4}\\Q'\end{bmatrix}$, where $Q'$ is the $9\times 4$ matrix given by
\begin{equation}\label{Qprime}
Q' = 	\left[\;
	\begin{tikzpicture}[baseline=({Q43.north}), every node/.style={minimum width=4em, minimum height=3ex}]
		\node (Q11) at (0,0) {$\frac{1}{1+z_2(z_1-1)}$};
		\node[right=3em of Q11] (Q12) {$0$};
		\node[right=3em of Q12] (Q13) {$z_1-1$};
		\node[right=3em of Q13] (Q14) {$0$};
		
		\node[below=0.1ex of Q11] (Q21) {$\frac{z'_1 z''_1}{1+z_2(z_1-1)}$};
		\node[right=3em of Q21] (Q22) {$0$};
		\node[right=3em of Q22] (Q23) {$0$};
		\node[right=3em of Q23] (Q24) {$0$};

		\node[below=0.1ex of Q21] (Q31) {};
		\node[right=3em of Q31] (Q32) {};
		\node at ([xshift=4em, yshift=-2ex]{Q31}) {$\mathbf{0}_{5\times2}$};
		\draw ({Q31.north west}) rectangle ([yshift=-4ex, xshift=1em]{Q32.south east});
		\node[right=3.3em of Q32] (Q33) {$0$};
		\node[right=3em of Q33] (Q34) {$(z_1-1)^{-1}$};

		\node[below=0.6ex of Q33] (Q43) {};
		\node[below=0.6ex of Q34] (Q44) {};
		\node at ([xshift=3.7em, yshift=-4.5ex]{Q43}) {$\mathbf{0}_{6\times2}$};
		\draw([xshift=-1em]{Q43.north west}) rectangle ([yshift=-8.5ex]{Q44.south east});

		\node[below=4ex of Q31] (Q51) {$\frac{z_2-1}{1+z_2(z_1-1)}$};
		\node[right=3em of Q51] (Q52) {$z_1z_2$};
		
		\node[below=0.1ex of Q51] (Q61) {$\frac{z_2-1}{1+z_2(z_1-1)}$};
		\node[right=3em of Q61] (Q62) {$z_1z_2$};
	\end{tikzpicture}\;\right].
\end{equation}
In what follows, we may therefore take $L = [l_1 \mid l_2 \mid Q]$, which completes the construction of
the matrix $A=[\delta^0\mid L]$ satisfying $\Tor_2=\pm(\det A)^{-1}$.

Before we compute the determinant of $A$, we can make certain simplifications.
For instance, eq.~\eqref{delta0} shows that the three leftmost columns of $\delta^0$
consist of four copies of the negative identity matrix stacked on top of one another.
After row operations on $A$ which zero out all but the top copy, we obtain
a matrix with the block form $\left[\begin{smallmatrix}-\Id & * \\0 & A'\end{smallmatrix}\right]$, where $A'$
is a $9\times9$ matrix such that $\det A = -\det A'$.
Next, observe that the last four colums of $A'$ are given exactly by the matrix $Q'$ of eq.~\eqref{Qprime}.
As the two rightmost columns of $Q'$ contain only one nonzero entry each, we may consider
the $7\times7$ matrix~$A''$ resulting from striking out the last two columns as well as rows 1 and 3 from
the matrix $A'$.
Since the nonzero entries in the deleted columns of $Q'$ are mutually reciprocal,
we have $\Tor_2 = \pm (\det A'')^{-1}$.
Using \eqref{CRH}--\eqref{monodromies} and applying the aforementioned transformations,
we obtain the explicit expression
\begin{equation}
A''=
\begin{bmatrix}
2z'_2 & 2z'_2 & \frac{2(2-z_2)}{z''_2} & \zeta_1 & \zeta''_2 & \frac{z''_1-1}{1+z_2(z_1-1)} & 0\\
1-z_1 & -z_1 & z_1+\frac{1}{z''_1}+z_2 & \zeta'_1 & 0 & 0 & 0 \\
0 & 2 & -2\bigl(\frac{1}{z''_1}+z_2\bigr) & -\zeta'_1 & -\zeta_2 & 0 & 0 \\
z_2^{-1} & 1 & z'_1-z_2 & 0 & -\zeta_2 & 0 & 0 \\
-\frac{z''_2}{z''_1} & 0 & z_2 & 0 & 0 & 0 & 0 \\
2(1-z'_1z''_2) & 2 & -2z_2 & -\zeta''_1 &
\zeta''_2-\zeta_2
& \frac{z_2-1}{1+z_2(z_1-1)} & z_1z_2 \\
b(z_1,z_2) & 2-z''_1z'_2 & -z_2-\frac{z''_1}{z''_2} & -\zeta''_1
& \zeta_1\zeta''_2-\zeta_2 & \frac{z_2-1}{1+z_2(z_1-1)} & z_1z_2
\end{bmatrix},
\end{equation}
where the bottom left entry is
$b(z_1,z_2)=\frac{1}{z''_1}+\zeta''_2(\zeta''_1-1)-\zeta'_2(\zeta'_1+2)$.
Direct computation shows that
\(
	\Tor_2 = \pm (\det A'')^{-1} = \pm z_2(1-z_1)
\).
At the same time, the right-hand side of the conjectural formula~\eqref{reduced-formula}
is
\[
	\pm\prod_{j=1}^2 {\zeta_j}^{-f_j} {\zeta_j'}^{-f'_j} {\zeta''_j}^{-f''_j}
	= \pm (\zeta'_1)^{-1} (\zeta_2)^{-1} = \pm z_2(1-z_1) = \Tor_2.
\]
This confirms Conjecture~\ref{reduced-conjecture}, implying that
the generalized \nword{1}{loop} conjecture holds
on the entire gluing variety of $\triang$ for any choice of the peripheral curve $\theta$.


\begin{thebibliography}{99}

\bibitem[Br04]{bromberg-rigidity}
\textsc{K. Bromberg},
Rigidity of geometrically finite hyperbolic cone-manifolds,
Geometriae Dedicata 105 (2004), 143--170.

\bibitem[CFMP04]{constantino-frigerio}
\textsc{F. Costantino, R. Frigerio, B. Martelli and C. Petronio},
Triangulations of $3$\nobreakdash-manifolds, hyperbolic relative handlebodies, and Dehn filling,
arXiv preprint math/0402339, 2004.

\bibitem[Ch04]{youngchoi}
\textsc{E.-Y. Choi},
Positively oriented ideal triangulations on hyperbolic three-manifolds,
Topology 43 (2004), 1345--1371.

\bibitem[DG13]{tudor-stavros}
\textsc{T. Dimofte and S. Garoufalidis},
The quantum content of the gluing equations,
Geom. \& Topol. 17 (2013), 1253--1315.

\bibitem[Di13]{dimofte-quantum-Riemann}
\textsc{T. Dimofte},
Quantum Riemann surfaces in Chern-Simons theory,
Adv. in Th. and Math. Phys. 17 (2013), 479--599.

\bibitem[Du05]{dubois-SU2}
\textsc{J. Dubois},
Non abelian Reidemeister torsion and volume form on the
$SU(2)$-representation space of knot groups,
Ann. Inst. Fourier 55 (2005), 1685--1734.

\bibitem[DV14]{tudor-roland-spectral}
\textsc{T. Dimofte and R. van~der Veen},
A spectral perspective on Neumann-Zagier.
arXiv preprint, arXiv:1403.5215, 2014.

\bibitem[FK11]{frohman-kania}
\textsc{C. Frohman and J. Kania-Bartoszy{\'n}ska},
Dubois' torsion, $A$-polynomial and quantum invariants,
arXiv preprint, arXiv:1101.2695, 2011.

\bibitem[GM93]{acuna-montesinos}
\textsc{F. Gonz{\'a}lez-Acu{\~n}a and J.M. Montesinos-Amilibia},
On the character variety of group representations in
  $SL(2,\mathbb{C})$ and $PSL(2,\mathbb{C})$,
Math. Z. 214 (1993), 627--652.

\bibitem[GM08]{gukov-murakami}
\textsc{S. Gukov and H. Murakami},
$SL(2,\mathbb{C})$ Chern-Simons theory and the
  asymptotic behavior of the colored Jones polynomial,
Lett. in Math. Phys. 86 (2008), 79--98.

\bibitem[Go58]{godement}
\textsc{R. Godement},
Topologie alg\'ebrique et th\'eorie des faisceaux,
Hermann Paris, 1958.

\bibitem[GSS16]{garoufalidis-sabo}
\textsc{S. Garoufalidis, E. Sabo and S. Scott},
Exact computation of the $n$-loop invariants of knots,
Exper. Math. 25 (2016), 125--129.

\bibitem[HK98]{craig-steve}
\textsc{C. Hodgson and S. Kerckhoff},
Rigidity of hyperbolic cone-manifolds and hyperbolic Dehn surgery,
J. of Diff. Geom. 48 (1998), 1--60.

\bibitem[HP03]{heusener-porti}
\textsc{M. Heusener and J. Porti},
The variety of characters in $PSL(2,\mathbb{C})$,
arXiv preprint math/0302075v2, 2003.

\bibitem[Hu80]{hubbard-monodromy}
\textsc{J. Hubbard},
The monodromy of projective structures,
Proc. of the 1978 Stony Brook Conference (1980), 257--275.

\bibitem[Ka97]{kashaev-volume-conjecture}
\textsc{R. Kashaev},
The hyperbolic volume of knots from the quantum dilogarithm,
Lett. in Math. Phys. 39 (1997), 269--275.

\bibitem[Mi66]{milnor-whitehead}
\textsc{J. Milnor},
Whitehead torsion,
Bull. of the AMS 72 (1966), 358--426.

\bibitem[MM63]{matsushima-murakami}
\textsc{Y. Matsushima and S. Murakami},
On vector bundle valued harmonic forms and automorphic forms on symmetric Riemannian manifolds,
Ann. of Math. 78 (1963), 365--416.

\bibitem[MP12]{porti-ferrer}
\textsc{P. Menal-Ferrer and J. Porti},
Twisted cohomology for hyperbolic three manifolds,
Osaka J. Math. 49 (2012), 741--769.

\bibitem[Ne92]{neumann1990}
\textsc{W. Neumann},
Combinatorics of triangulations and the Chern-Simons invariant for hyperbolic $3$-manifolds,
Topology 90 (1992), 243--272.

\bibitem[Ne04]{neumann-BPSL}
\textsc{W. Neumann},
Extended Bloch group and the Cheeger-Chern-Simons class,
Geom. \& Topol. 8 (2004), 413--474.

\bibitem[Ni03]{nicolaescu-book}
\textsc{L. Nicolaescu},
The Reidemeister torsion of $3$-manifolds,
Walter de Gruyter, 2003.

\bibitem[No60]{nomizu}
\textsc{K. Nomizu},
On local and global existence of Killing vector fields,
Ann. of Math. 72 (1960), 105--120.

\bibitem[NZ85]{neumann-zagier}
\textsc{W. Neumann and D. Zagier},
Volumes of hyperbolic three-manifolds,
Topology 24 (1985), 307--332.

\bibitem[Po97]{porti1997}
\textsc{J. Porti},
Torsion de Reidemeister pour les vari\'et\'es hyperboliques, Memoirs of the AMS 128(612), 1997.

\bibitem[Re]{regina}
\textsc{B. Burton, R. Budney, W. Pettersson et~al},
Regina: Software for low-dimensional topology,
{\tt https://\allowbreak regina-normal.github.io/}, 1999--2020.

\bibitem[Rh51]{derham-torsion}
\textsc{G. de~Rham},
Complexes a automorphismes et hom\'eomorphie diff\'erentiable,
Ann. Inst. Fourier 1951, 51--67.

\bibitem[Si17]{rs-phd}
\textsc{R. Siejakowski},
On the geometric meaning of the non-abelian Reidemeister
torsion of cusped hyperbolic 3-manifolds,
PhD thesis, Nanyang Technological University Singapore, 2017.

\bibitem[SM]{sagemath}
\texttt{SageMath},
An open source mathematical software system for Windows, Linux and MacOS,
available at \texttt{https://\allowbreak www.sagemath.org}, 2008--2020.

\bibitem[SP]{snappy}
\textsc{M. Culler, N. Dunfield, M. Goerner and J. Weeks},
SnapPy: A computer program for studying the geometry and topology of $3$\nobreakdash-manifolds.
Available at {\tt https://\allowbreak github.com/3-manifolds/SnapPy}, 2007--2020.

\bibitem[St43]{steenrod1943}
\textsc{N. Steenrod},
Homology with local coefficients,
Ann. of Math. 44 (1943), 610--627.

\bibitem[Th80]{thurston-notes}
\textsc{W. Thurston},
The geometry and topology of three-manifolds,
Lecture notes, 1980.
Available electronically
at \texttt{http://\allowbreak library.msri.org/books/gt3m/}.

\bibitem[Tu01]{turaev}
\textsc{V. Turaev},
Introduction to combinatorial torsions,
Birkh{\"a}user Verlag, 2001.

\bibitem[We64]{weil-remarks}
\textsc{A. Weil},
Remarks on the cohomology of groups,
Ann. of Math. 80 (1964), 150--157.

\end{thebibliography}
\end{document}